\let\checkedref\ref
\let\checkedeqref\eqref
\def\even{{\hbox{\scriptsize\rm even}}}
\def\tb{\mathop{\rm tb}\nolimits}
\def\size{\mathop{\rm size}\nolimits}
\def\sset{\subseteq}
\let\cong\equiv
\def\Z{\mathbb{Z}}
\def\pspace{p\kern.1em}
\newtheorem{theorem}{Theorem}
\newtheorem{lemma}[theorem]{Lemma}
\theoremstyle{remark}
\newtheorem{remark}[theorem]{Remark}
\newtheorem{example}[theorem]{Example}
\numberwithin{theorem}{section}
\numberwithin{equation}{section}
\begin{document}

\title{Best play in Dots~\& Boxes endgames}
\author{Daniel Allcock}
\thanks{Supported by Simons Foundation Collaboration Grant 429818}
\address{Department of Mathematics\\University of Texas, Austin}
\email{allcock@math.utexas.edu}
\urladdr{http://www.math.utexas.edu/\textasciitilde allcock}
\subjclass[2010]{91A46
}
\date{November 26, 2018}

\begin{abstract}
    We give very simple algorithms for best play in 
    the simplest kind of
    Dots~\& Boxes endgames: those that consist entirely of loops
    and long chains.  In every such endgame we compute
    the margin of victory, 
    assuming both players
    maximize the numbers of boxes they capture, and specify a move
    that leads to that result.
    We  improve on results by Buzzard and Ciere
    \cite{BC} on the same problem: our algorithms
    examine only the current position and do not need to consider
    the game tree at all.
\end{abstract}

\maketitle

\section{Introduction}
\label{SecIntro}

Dots~\& Boxes is one of the few pen-and-paper games that
has a rich mathematical theory
and is also played at a high level
by many people without any special interest in mathematics.  
There are several websites where one can play online against 
other people,
such as \texttt{littlegolem.net}
and \texttt{yourturnmyturn.com}.
Go shares these properties, but mathematics  
has more limited application in actual play~\cite{Berlekamp-Wolfe}.

We give the 
rules and other background in section~\checkedref{SecDnB}.
In brief, one begins with a grid of dots.  When it is your turn, you
join two  adjacent dots.  If you complete a box this way
then you move again immediately.  You continue in this way until you
move without completing
a box (when play passes to your opponent) or you  
complete the last box (when the game ends).
Whoever completes more boxes wins.
See section~\checkedref{SecDnB} for details and for some
standard elements of play like  \emph{loops} and \emph{long chains}, 
the \emph{parity rule} and the \emph{hard-hearted handout}.
For further background we recommend 
\cite{Berlekamp} or \cite{WinningWays}.

The first 
contribution of mathematics to Dots~\& Boxes is the idea of 
moving as though playing a simpler game called Nimstring. 
The rules are the same except for who wins: in Nimstring,
the winner is whoever completes the
last box.  This sounds like a radical change, but 
good Nimstring moves are often good Dots~\& Boxes moves.
As the name suggests, Nimstring is amenable to the Sprague-Grundy
theory for Nim-like games \cite[Ch.~16]{WinningWays}.
This leads to a trick 
called the parity rule.  If neither player knows
it then the game proceeds mostly randomly; if only 
one knows it then she will win;
if both know it then the game becomes challenging and interesting.  
The interest
and challenge come from understanding the positions where there 
is a winning move,
even when the Nimstring perspective suggests the game is lost.   

An expert player who expects to lose the Nimstring game will try 
to steer the game toward
such a position.
Typically he does this by aiming for an endgame with
many $3$-chain and $4$- and $6$-loops.  
Therefore high-level play requires an understanding of such positions.
This paper gives a complete analysis of  endgames 
that consist entirely of 
loops and long chains.  We restrict attention 
to such positions unless otherwise indicated.
We were inspired by the study of these endgames by
Buzzard and Ciere \cite{BC}.  While their algorithms 
for determining the value of a game
and an optimal move
require only linear time, they 
involve trees of cases and subcases, and also a limited form of recursion.
Our algorithms are simpler.  

\medskip
If it is your turn and you must choose a loop or 
long chain to open (play in) then we call you the
\emph{opener}.
Once you choose this component~$C$, it does not
matter where in~$C$ you play. Your opponent will
either take all those boxes, or reply with the hard-hearted
handout (see below), and where you play in~$C$ has no
effect on these options. 

Nimstring suggests
that you have lost.  We will see
that you cannot win the endgame, in the sense
that you cannot capture more of the remaining boxes than a
skilled
opponent.
But if you 
captured more boxes than she did,
before reaching the endgame, 
and you lose the endgame by only a little, then
 you may
still be able to win.
The strategy in theorem~\checkedref{ThmOpener} is guaranteed to lose
the endgame by as little as possible, and therefore
gives your best chance of winning.  
Your opponent is called the \emph{controller},
for reasons explained below.

A chain or loop of length $N$ is called an $N$-chain or
$N$-loop, and often indicated by simply writing $N$, attaching
a subscript $\ell$ to indicate a loop.  
We use additive notation in the obvious way, for example
$G=3+4_\ell+8_\ell$
means that $G$ consists
of a $3$-chain, a $4$-loop, an $8$-loop and possibly some
already-claimed boxes:
\def\SCALE{.5}
\def\dotsize{.07}
$$
\begin{tikzpicture}[xscale=\SCALE,yscale=\SCALE]
    \fill[gray](0,2)rectangle(2,3);
    \fill[gray](2,0)rectangle(5,1);
    \draw[thick](0,4)--(2,4);
    \draw[thick](0,3)--(3,3)--(3,4);
    \draw[thick](0,0)--(0,2)--(2,2)--(2,0)--cycle;
    \draw[thick](5,1)--(5,4)--(3,4)--(3,3)--(2,3)--(2,1)--cycle;
    \draw[thick](3,2)--(4,2)--(4,3);
    \draw[thick](0,2)--(0,3);
    \draw[thick](1,2)--(1,3);
    \draw[thick](3,0)--(3,1);
    \draw[thick](4,0)--(4,1);
    \draw[thick](5,1)--(5,0)--(2,0);
    \draw[fill=black](0,0)circle(\dotsize);
    \draw[fill=black](0,1)circle(\dotsize);
    \draw[fill=black](0,2)circle(\dotsize);
    \draw[fill=black](0,3)circle(\dotsize);
    \draw[fill=black](0,4)circle(\dotsize);
    \draw[fill=black](1,0)circle(\dotsize);
    \draw[fill=black](1,1)circle(\dotsize);
    \draw[fill=black](1,2)circle(\dotsize);
    \draw[fill=black](1,3)circle(\dotsize);
    \draw[fill=black](1,4)circle(\dotsize);
    \draw[fill=black](2,0)circle(\dotsize);
    \draw[fill=black](2,1)circle(\dotsize);
    \draw[fill=black](2,2)circle(\dotsize);
    \draw[fill=black](2,3)circle(\dotsize);
    \draw[fill=black](2,4)circle(\dotsize);
    \draw[fill=black](3,0)circle(\dotsize);
    \draw[fill=black](3,1)circle(\dotsize);
    \draw[fill=black](3,2)circle(\dotsize);
    \draw[fill=black](3,3)circle(\dotsize);
    \draw[fill=black](3,4)circle(\dotsize);
    \draw[fill=black](4,0)circle(\dotsize);
    \draw[fill=black](4,1)circle(\dotsize);
    \draw[fill=black](4,2)circle(\dotsize);
    \draw[fill=black](4,3)circle(\dotsize);
    \draw[fill=black](4,4)circle(\dotsize);
    \draw[fill=black](5,0)circle(\dotsize);
    \draw[fill=black](5,1)circle(\dotsize);
    \draw[fill=black](5,2)circle(\dotsize);
    \draw[fill=black](5,3)circle(\dotsize);
    \draw[fill=black](5,4)circle(\dotsize);
\end{tikzpicture}
$$
In actual play the claimed boxes (shaded here) would be marked
with the players' initials.  
We ignore
such boxes
when discussing $G$ because they do not affect play.
The \emph{value}
$v(G)$ of any endgame~$G$ means the margin
by which the controller will beat the opener, assuming 
that they enter the endgame
with a tie score and then play optimally.  
So $v(3+4_\ell+8_\ell)=1$ means that the opener can win if
and only if he earned 
at least
a $2$-box advantage during whatever play led to this endgame.
To lose this endgame by only one box, open the
$4$-loop first and  the $8$-loop second.  

The  size of $G$  means the number of (unclaimed) boxes in~$G$, and
the \emph{controlled value} $c(G)$
is defined below
and explained in section~\checkedref{SecDnB}.
For now it is enough to know that you can compute $c(G)$ 
quickly in your head.
The 
\emph{standard move} 
means to open a $3$-chain if one is present, 
otherwise a shortest loop if a loop is present, and
otherwise a shortest chain.
See section~\checkedref{SecOpener} for the proof of the following result.

\begin{theorem}[Opener strategy]
    \label{ThmOpener}
    Suppose $G$ is a nonempty Dots~\& Boxes position that consists 
    of loops and long chains.  
    In each of the following cases,
    opening the shortest loop is optimal:
    \begin{enumerate}
        \item
            \label{ItemMaster3PlusLoops}
            \leavevmode
            \rlap{$c(G)\geq2$}%
            \phantom{$c(G)\in\{0,\pm1\}$}
            and $G=3_{\phantom{\ell}}+\hbox{\rm(one or more loops)};$
        \item
            \label{ItemMasterCAtLeastMinus1}
            \leavevmode
            \hbox{$c(G)\in\{0,\pm1\}$} 
            and  
            $G=4_\ell+\hbox{\rm(anything except $3+3+3$)};$
        \item
            \label{ItemMasterSize3Mod4}
            \leavevmode
            \rlap{$c(G)\leq-2$}%
            \phantom{$c(G)\in\{0,\pm1\}$}
            and 
            $G=4_\ell+3+H$, where $4|\size(H)$ and $H$ has no $3$-chains.
    \end{enumerate}
    In all other cases the standard move is optimal.
\end{theorem}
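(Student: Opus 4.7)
The plan is to induct on $\size(G)$, exploiting the recursion
\[
  v(G) \;=\; \min_{C}\Bigl(|C| \;-\; h_C \;+\; \bigl|\,v(G-C)\; -\; h_C\,\bigr|\Bigr),
\]
where $C$ ranges over the components of $G$ and $h_C$ equals $2$ if $C$ is a long chain and $4$ if $C$ is a loop. This captures the controller's binary reply to the opener playing in $C$: either take every box of $C$ (reversing roles on $G-C$, contributing $|C|-v(G-C)$), or perform the hard-hearted handout (keeping control at a cost of $2h_C$, contributing $|C|-2h_C+v(G-C)$). The identity $\max\{|C|-w,\,|C|-2h+w\}=|C|-h+|w-h|$ collapses the two options into the displayed form. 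Before beginning the case analysis I would record $v$ and $c$ on the small anchor positions $3$, $4_\ell$, $3+3$, $3+3+3$, $4_\ell+3$, $4_\ell+4_\ell$, and $4_\ell+3+3$, since many recursion expansions terminate at them.

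The case analysis then proceeds in the order the theorem states its cases. In case~(1), where $c(G)\geq 2$ and $G=3+L$ for a nonempty collection $L$ of loops, the $3$-chain move has cost $1+|v(L)-2|$ while the shortest-loop move $s_\ell$ has cost $s-4+|v(3+L-s_\ell)-4|$; the hypothesis $c(G)\geq 2$ is strong enough that on the relevant subpositions the inductive hypothesis yields $v=c$, making both costs computable and the loop move visibly optimal. In case~(2), $|c(G)|\leq 1$ with a $4$-loop present pins $v(G-4_\ell)$ into a narrow range, and the cost $|v(G-4_\ell)-4|$ of opening the $4$-loop can be compared directly with the costs of opening each other component; the single excluded position $4_\ell+3+3+3$ is the only configuration where this comparison fails, and must be checked separately. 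The ``otherwise'' branch, where the theorem prescribes the standard move, follows from the textbook parity-rule argument together with a short comparison of handout costs across the available components.

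The hard part will be case~(3): $G=4_\ell+3+H$ with $H$ free of $3$-chains and $\size(H)\equiv 0\pmod 4$. The divisibility condition is essential, since it is exactly what forces $v(H)$ and $v(3+H)$ into the residue classes modulo~$4$ that make opening the $4_\ell$ beat opening the $3$-chain by exactly one box. To handle this case I would derive explicit formulas (or at least residues mod~$4$) for $v(H)$ and $v(3+H)$ under the structural hypotheses on $H$, and combine them with $c(G)\leq-2$ to verify that the controller's optimal reply to $4_\ell$ is to take the four boxes rather than HHH, yielding a subgame whose value matches the prediction. Showing that the $4_\ell$ move beats every other opening reduces to comparing $v(H)$ with $v(H-C)$ as $C$ ranges over the components of $H$; this splits naturally according to whether $H$ consists only of loops, only of long chains of length $\geq 4$, or a mixture, and it is here, with repeated appeals to $4\mid\size(H)$, that the bulk of the bookkeeping lives.
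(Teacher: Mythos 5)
Your recursion $v(G)=\min_C\bigl(|C|-h_C+|v(G-C)-h_C|\bigr)$ is exactly the paper's equation for $v(G;C)$, and your overall architecture (induct, anchor on small positions, split on the theorem's cases) is the same skeleton the paper uses. But as written the proposal has a genuine gap, concentrated in two places. First, the ``otherwise'' branch is not ``the textbook parity-rule argument together with a short comparison of handout costs.'' The parity rule concerns who wins the \emph{Nimstring} game, i.e.\ who is forced to open; it says nothing about which component the opener should open to minimize the box margin, which is the entire content of this theorem. In the paper that branch is where most of the work lives: it requires separate inductions computing $v$ explicitly when $c<2$ with a $3$-chain and no $4$-loop, when $c\le-2$ with $\theta\ge2$ or with $\theta=1$ and $\size(G)\not\cong3$ bmod $4$, when $c<2$ with neither $3$-chains nor $4$-loops (where a $6$-loop must be shown to exist and to be optimal), the exceptional family $4_\ell+3^k$, and the general $c\ge2$ case. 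None of these reduces to a short local comparison; each needs a value formula for $v(G-C)$ that is itself proved by induction.

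That points to the second, structural gap: you never state an induction hypothesis strong enough to close the induction. Knowing only ``the prescribed move is optimal for all smaller positions'' does not let you evaluate $|v(G-C)-h_C|$ for the competing components $C$; you must carry along explicit formulas (or at least exact residues) for $v$ in every regime --- this is why the paper's proof consists of a chain of lemmas each asserting both a value formula and an optimality claim, proved simultaneously. You do recognize this for case~(3), where you propose deriving formulas for $v(H)$ and $v(3+H)$ with $4\mid\size(H)$ (this is precisely what the paper's analogues of its $4_\ell$-optimality lemmas do, including the dependence on $c+4f$ and on the parity of the number of $4$-loops), but the same necessity applies to cases (1), (2), and especially the ``otherwise'' branch. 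Until the induction hypothesis is upgraded to include those value formulas and the ``otherwise'' branch is actually argued, this is a plan rather than a proof.
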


The ``phase transition'' when $c(G)$ changes from $1$ to~$2$ 
was discovered by Berlekamp.  
The transition from $-2$ to $-1$ appears to be new.  The proofs
of this theorem and those below involve much case analysis,
but the results  can be assembled afterwards into fairly simple 
conclusions.

Berlekamp introduced the controlled value $c(G)$
because it is easy to compute and 
carries a lot of information about $v(G)$.  
For example, they are equal if $c(G)\geq2$ 
(see theorem~\checkedref{ThmCAtLeast2}).
The definition is
\begin{equation}
    \label{EqDefControlledValue}
    c(G) = \size(G) - 4(\hbox{\# long chains}) -8(\hbox{\# loops})
    +\tb(G)
\end{equation}
where $\tb(G)$  
is called the \emph{terminal bonus} of $G$ and defined
by
\begin{equation}
  \label{EqTB}
  \tb(G)=
  \begin{cases}
    $0$&\hbox{if $G$ is empty}
    \\
    $8$&\hbox{if $G=(\hbox{one or more loops})$}
    \\
      $6$&\hbox{if $G=(\hbox{one or more loops})
      +(\hbox{one or more $3$-chains})$}
    \\
    $4$&\hbox{otherwise}
  \end{cases}
\end{equation}
See section~\checkedref{SecDnB} for why $c(G)$ is called
the controlled value.

\begin{example}
    Suppose $G$ consists of five $3$-chains, a $4$-loop and an $8$-loop.
    We have
    $c(G)=27-4\cdot5-8\cdot2+6=-3$.  
    The theorem says that opening the components in the
    order $3$, $3$, $4_\ell$, $3$, $3$, $8_\ell$,~$3$ is optimal.
    We used a computer to check this, and also that the only other
    optimal line of play is
    $3$, $4_\ell$, $3$, $3$, $3$, $8_\ell$,~$3$.
    This shows that there is a certain subtlety to the order in
    which one must open the components, and suggests that theorem~\ref{ThmOpener}
    may be the simplest strategy possible.
\end{example}

If you are the controller, and the  opener has just
opened a loop or long chain, then the decision you face is whether
to keep control or give it up.  \emph{Giving up control} means that you 
take all of the boxes in the just-opened component.
Unless that ends the game
you must move again, so you become the opener and your opponent
the controller.  As the new opener, you can use 
theorem~\checkedref{ThmOpener} to choose which component to open.
\emph{Keeping control} means that 
you take all but a few 
of the boxes in the opened component
($4$ for a loop or $2$ for a long chain).  This is called 
the hard-hearted handout: a handout because you
are giving 
your opponent some boxes, and hard-hearted because after
he takes them he must open the next component.  
In this case he remains the opener and you remain
the controller.

Now we consider the situation where the opener has
just opened component $C$  of a position $G$.
It is easy to see that the controller should
keep control if $v(G-C)$ is larger than the number of boxes ($2$ or~$4$)
given away in the hard-hearted handout,  and give up control if $v(G-C)$
is smaller.  She may choose either option in case of equality.
We  use subtractive  notation in the obvious way: $G-C$ means
the position got from $G$ by removing~$C$.
So the following
well-known result gives an optimal strategy for the controller.

\begin{theorem}[Controller strategy]
    \label{ThmController}
If the opener has just opened a component $C$ of $G$,
then the following gives an optimal move for the controller.
Keep control
if $C$ is a loop and $c(G-C)>4$, or if $C$ is chain and $v(G-C)>2$;
otherwise give up control.
\qed
\end{theorem}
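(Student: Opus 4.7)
The plan is a direct comparison of the controller's two options. Let $|C|$ be the number of unclaimed boxes in the just-opened component, and set $k=2$ or $k=4$ according as $C$ is a chain or a loop. For each option I would tally the controller's net margin (her boxes minus the opener's) over the whole position $G$, then compare.

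If the controller gives up control, she takes all $|C|$ boxes of $C$, and must then move first in $G-C$; roles are swapped, so she is outscored on $G-C$ by $v(G-C)$, giving total margin $|C|-v(G-C)$. If she keeps control via the hard-hearted handout, she takes $|C|-k$ boxes, the opener takes $k$ boxes and then must open in $G-C$; roles are preserved, so she outscores him by $v(G-C)$ on the remainder, giving total margin $(|C|-k)-k+v(G-C)=|C|-2k+v(G-C)$. Subtracting, keeping control is strictly better when $v(G-C)>k$, strictly worse when $v(G-C)<k$, and yields a tie at equality (where either choice is optimal).

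This immediately gives the chain criterion $v(G-C)>2$ in the theorem. For loops the threshold is $v(G-C)>4$, whereas the stated criterion is $c(G-C)>4$; the equivalence of the two inequalities is the only nontrivial point. The forward direction is clean: $c(G-C)>4$ forces $c(G-C)\geq2$, and then theorem~\checkedref{ThmCAtLeast2} gives $v(G-C)=c(G-C)>4$. The converse requires the general bound $v\leq c$ on positions of loops and long chains, which I expect is either proved earlier in the paper or is essentially built into $c$ via a hard-hearted-handout lower estimate on the controller's score. This small appeal to a preceding result is the only obstacle; once it is in place, the loop criterion follows and the proof is complete.
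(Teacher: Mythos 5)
Your tally of the two options is exactly the paper's: the margins $\size(C)-v(G-C)$ for giving up control and $\size(C)-2k+v(G-C)$ for keeping it appear verbatim in section~\ref{SecDnB} (leading to equation~\eqref{EqvGgivenC}), and the resulting thresholds $v(G-C)>2$ for chains and $v(G-C)>4$ for loops are precisely how the paper arrives at the theorem. The gap is in the last step, the equivalence $v(G-C)>4\iff c(G-C)>4$. You propose to get the converse from ``the general bound $v\leq c$,'' but that inequality is false: the paper proves the \emph{reverse} inequality $c(G)\leq v(G)$ (lemma~\ref{LemTrivRlns}\eqref{ItemCatmostV}, via the controller committing to the control strategy --- this is the hard-hearted-handout estimate you allude to, and it bounds $v$ from \emph{below} by $c$, not from above). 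For example $v(3^4)=2$ while $c(3^4)=0$. Moreover, even the correct inequality $c\leq v$ is useless here: it does not convert $c(G-C)\leq4$ into $v(G-C)\leq4$.

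What is actually needed to rule out ``$v(G-C)>4$ but $c(G-C)\leq4$'' is the dichotomy: if $c(G-C)\geq2$ then $v(G-C)=c(G-C)\leq4$ by theorem~\ref{ThmCAtLeast2}, and if $c(G-C)<2$ then $v(G-C)\in\{0,1,2,3,4\}$. The second half is not a soft fact about the definition of $c$; it is a consequence of the full classification of values (theorem~\ref{ThmValuesExplicit}, built from the lemmas of section~\ref{SecOpener}), and the paper explicitly presents the equivalence $v(G-C)>4\iff c(G-C)>4$ as a corollary of that theorem. So your argument is correct up to and including the chain criterion, but the loop criterion as stated (with $c$ in place of $v$) genuinely rests on the value computations, not on a one-line inequality.
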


 To use this strategy the controller must be
able to recognize when $v(G-C)>2$ or~$4$.
It is very easy to recognize when
$v(G-C)>4$ because this is equivalent to $c(G-C)>4$.
This and 
the following result are corollaries of 
theorem~\ref{ThmValuesExplicit}, which gives all all
values explicitly.

\begin{theorem}[Values${}>2$]
  \label{ThmVbiggerThan2}
  We have $v(G)>2$ if and only if: 
  either $c(G)>2$, or else $G$ satisfies
  one of the two alternatives

  $G$ has exactly one $3$-chain and $\size(G)\cong3$ {\rm mod}~$4$

  \nopagebreak
  $G$ has no $3$-chains and $\size(G)\not\cong2$ {\rm mod}~$4$

  \noindent and one of the two alternatives

  $c(G)+4f(G)>2$ and $c(G)\cong\pm3$ or $4$ {\rm mod}~$8$

  \nopagebreak
  $c(G)+4f(G)<2$ and $f(G)$ is even

  \noindent
  where $f(G)$ is the number of $4$-loops in~$G$.
  \qed
\end{theorem}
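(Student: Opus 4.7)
The plan is to derive this directly from Theorem~\ref{ThmValuesExplicit}, which by assumption gives $v(G)$ explicitly in terms of $c(G)$, $\size(G)$ modulo~$4$, the number of $3$-chains, and $f(G)$. Once we have that closed form, the content of Theorem~\ref{ThmVbiggerThan2} is purely the combinatorial task of reading off the inequality $v(G) > 2$ from the formula.

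First I would dispose of the easy half. When $c(G) > 2$, Theorem~\ref{ThmCAtLeast2} (the equality $v(G) = c(G)$ when $c(G) \geq 2$) immediately yields $v(G) > 2$. So in this range the first disjunct in the statement already captures everything, and we need only match the remaining cases. Conversely, when $c(G) \leq 2$, the explicit formula controls $v(G)$, and we must show that the elaborate ``alternatives'' in the statement carve out exactly the sub-region where the formula exceeds~$2$.

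Next I would set up the case split used in Theorem~\ref{ThmValuesExplicit}. The formula there surely breaks according to (a) whether $G$ has a $3$-chain, (b) $\size(G)$ modulo~$4$, and (c) the sign of $c(G)+4f(G)$ relative to some threshold near~$2$; the two ``alternatives'' in our statement correspond exactly to these structural splits. For each branch of the formula I would substitute into $v(G) > 2$, simplify, and match the resulting constraint against the stated conditions. The first pair of alternatives (on $3$-chains and $\size(G)\bmod 4$) is what gates whether the game admits any ``extra'' contribution beyond $c(G)$; the second pair (on $c(G)+4f(G)$ and the residue of $c(G)$ mod~$8$, or the parity of $f(G)$) is what determines whether that contribution actually pushes the value above~$2$. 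Both pairs must be simultaneously satisfied, which matches the ``and'' between them.

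The main obstacle I expect is the boundary behavior: the condition splits at $c(G)+4f(G)=2$ and at various residues modulo~$8$, and the explicit formula in Theorem~\ref{ThmValuesExplicit} presumably has several regimes whose boundaries do not align with these thresholds in an obvious way. One must check carefully that the strict inequality $v(G) > 2$ — as opposed to $\geq 2$ — lines up with strict inequalities $c(G)+4f(G) > 2$ and $< 2$ in the statement, with the equality case $c(G)+4f(G)=2$ falling on the correct side. The rest is bookkeeping: for each of the finitely many regimes in the explicit-values theorem, evaluate $v(G)-2$ and verify that its positivity is equivalent to the stated Boolean combination of congruences.
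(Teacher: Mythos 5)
Your proposal matches the paper's own treatment: Theorem~\ref{ThmVbiggerThan2} is stated with no separate proof precisely because it is read off case-by-case from the explicit value formula of Theorem~\ref{ThmValuesExplicit}, which is exactly the derivation you outline. You also correctly flag the only delicate point, the boundary $c(G)+4f(G)=2$, which is harmless because $c\cong\pm3$ mod~$8$ forces $c+4f$ odd and $c\cong4$ mod~$8$ is incompatible with $c+4f=2$.
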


In particular,
if two or more $3$-chains are present then $v(G)>2$ if and only
if $c(G)>2$.  
The same holds if just one 
is present and $\size(G)\not\cong3$ mod~$4$.
In the remaining cases we prefer  
theorem~\ref{ThmValuesProcedural}\eqref{ItemValProcOperators}
to the complicated
second pair of alternatives.  
It gives $v(G)$ as the result of a simple process applied
to a simple starting number, and is much easier to remember.

Section~\checkedref{SecMidgame}  gives some consequences for mid-game
strategy.  For example, 
the player who is opener when the endgame begins will
probably lose if he has only a 
$1$-box advantage, but very likely win with
a $2$-box advantage.
This assumes an 
odd${}\times{}$odd board and that enough $3$-chains and
$4$- and $6$-loops are
present to make $c(G)<2$.  
Players often sacrifice a box or two during the midgame to
create the ``right'' number  of long chains.  (The goal
is to be controller in the endgame, because in Nimstring
the controller always wins. See
the parity rule in section~\ref{SecDnB}.)  To first approximation
our results show that
sacrificing one box is safe but sacrificing two is suicide.

The natural next endgames to consider
have components more complicated than loops and chains, such as
\def\dotsize{.07}
\def\GRID{%
    \draw[fill=black](0,0)circle(\dotsize);
    \draw[fill=black](0,1)circle(\dotsize);
    \draw[fill=black](0,2)circle(\dotsize);
    \draw[fill=black](0,3)circle(\dotsize);
    \draw[fill=black](0,4)circle(\dotsize);
    \draw[fill=black](0,5)circle(\dotsize);
    \draw[fill=black](1,0)circle(\dotsize);
    \draw[fill=black](1,1)circle(\dotsize);
    \draw[fill=black](1,2)circle(\dotsize);
    \draw[fill=black](1,3)circle(\dotsize);
    \draw[fill=black](1,4)circle(\dotsize);
    \draw[fill=black](1,5)circle(\dotsize);
    \draw[fill=black](2,0)circle(\dotsize);
    \draw[fill=black](2,1)circle(\dotsize);
    \draw[fill=black](2,2)circle(\dotsize);
    \draw[fill=black](2,3)circle(\dotsize);
    \draw[fill=black](2,4)circle(\dotsize);
    \draw[fill=black](2,5)circle(\dotsize);
    \draw[fill=black](3,0)circle(\dotsize);
    \draw[fill=black](3,1)circle(\dotsize);
    \draw[fill=black](3,2)circle(\dotsize);
    \draw[fill=black](3,3)circle(\dotsize);
    \draw[fill=black](3,4)circle(\dotsize);
    \draw[fill=black](3,5)circle(\dotsize);
    \draw[fill=black](4,0)circle(\dotsize);
    \draw[fill=black](4,1)circle(\dotsize);
    \draw[fill=black](4,2)circle(\dotsize);
    \draw[fill=black](4,3)circle(\dotsize);
    \draw[fill=black](4,4)circle(\dotsize);
    \draw[fill=black](4,5)circle(\dotsize);
    \draw[fill=black](5,0)circle(\dotsize);
    \draw[fill=black](5,1)circle(\dotsize);
    \draw[fill=black](5,2)circle(\dotsize);
    \draw[fill=black](5,3)circle(\dotsize);
    \draw[fill=black](5,4)circle(\dotsize);
    \draw[fill=black](5,5)circle(\dotsize);
}
\def\SCALE{.5}
\def\COLA{7.5}
\def\COLB{15}
\def\HT{3.25}
\begin{center}
    \begin{tikzpicture}[baseline=2.5,xscale=\SCALE,yscale=\SCALE]
        \draw[thick](0,0)--(0,3);
        \draw[thick](1,0)--(1,1);
        \draw[thick](0,2)--(2,2)--(2,0)--(5,0)--(5,1)--(4,1);
        \draw[thick](2,3)--(2,2)--(4,2)--(4,0);
        \draw[thick](0,4)--(2,4);
        \draw[thick](1,3)--(1,4);
        \draw[thick](0,5)--(5,5)--(5,4);
        \draw[thick](3,5)--(3,4);
        \draw[thick](3,3)--(5,3)--(5,2);
        \draw[thick](4,4)--(4,3);
        \draw(4.5,0.5)node{O};
        \GRID
    \end{tikzpicture}%
\end{center}
In this example every move is loony (ie, loses the Nimstring
game), but if the controller always keeps control then
the opener O can win.  She 
starts this endgame with a $1$-box advantage and can choose to play as
though in $4_\ell+4+4+4+8$, which has value~$0$.
One could also allow loops to have odd length.  It is
conceivable that our strategy for the opener remains valid.
But  length~$7$ loops definitely do complicate some
intermediate results, as in \cite[remark~16]{BC}, and
length~$5$ loops are probably worse.

\medskip
We are grateful to Kevin 
Buzzard and Michael Burton for interesting and helpful conversations.
In particular, this 
paper would not exist without the prior work
of Buzzard and Ciere \cite{BC}. 
We also remark that William Fraser  announced on the games
website {\tt littlegolem.net} that on April 28, 2017
he finished the calculations needed
for his program {\it The Shark}
to play the
$5\times5$ game perfectly.  
One can play against it on this
website, but  no further details seem to be public.

\section{Dots~\& Boxes}
\label{SecDnB}

\noindent
In the first part of
this section we review the rules of Dots~\& Boxes and 
some standard elements of good play.
The main references are \cite[Ch.~16]{WinningWays}
and  \cite{Berlekamp}.  
In the second part we discuss some more-technical
material that we will need.

The game begins with a grid of dots.  
At the end of the game, each pair of (horizontally or vertically) adjacent
dots will be joined by an edge, making a grid of boxes.  The board size is
typically $5\times5$, meaning $5$ boxes
by $5$ boxes.
Odd${}\times{}$odd boards are best because ties
are impossible.
After the game, each box will belong to one player
or the other, and whoever has the most
boxes will win.
On her turn,
a player moves by drawing a line connecting two adjacent dots that are not yet joined.  
Any boxes completed by this line then belong to her, and if at least one box was
completed then she moves again immediately.  She continues in this way
until she moves without completing a box
(when it becomes the other player's turn) or
she completes the grid (ending the game).  
In particular, a player's turn may consist of  more than one move.  
It often happens that 
the segment completing a box
also completes the third
edge of another box.  In this case the extra move
earned by completing the first 
box enables her to complete the second box,
which in turn might allow her to complete a third
box, and so on.
In this way a player may capture an entire chain of boxes in
a single turn.

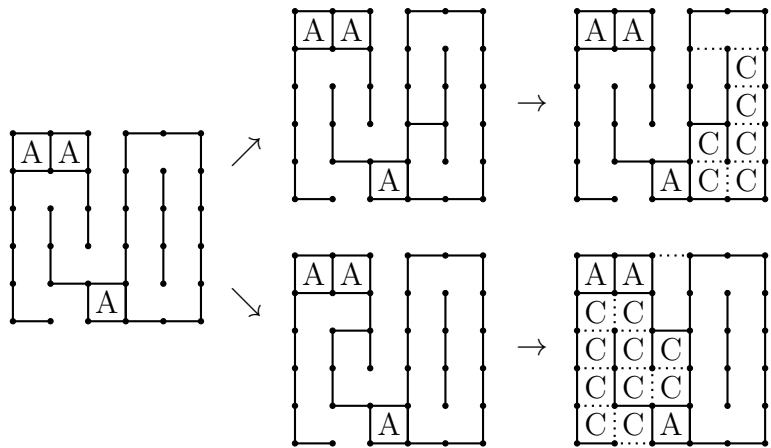
\begin{figure}
    \begin{tikzpicture}[xscale=\SCALE,yscale=\SCALE]
        \draw(0,0)node{%
        \begin{tikzpicture}[baseline=2.5,xscale=\SCALE,yscale=\SCALE]
            \draw[thick](3,0)--(3,5)--(5,5)--(5,0)--cycle;
            \draw[thick](4,1)--(4,4);
            \draw[thick](1,4)--(1,5)--(2,5)--(2,4)--(0,4)--(0,5)--(1,5);
            \draw(0.5,4.5)node{A};
            \draw(1.5,4.5)node{A};
            \draw[thick](2,0)--(2,1)--(3,1)--(3,0)--cycle;
            \draw(2.5,0.5)node{A};
            \draw[thick](2,2)--(2,4);
            \draw[thick](1,3)--(1,1)--(2,1);
            \draw[thick](0,4)--(0,0)--(1,0);
            \GRID
        \end{tikzpicture}%
            };%
        \draw(\COLA,\HT)node{%
            \begin{tikzpicture}[xscale=\SCALE,yscale=\SCALE]
                \draw[thick](3,0)--(3,5)--(5,5)--(5,0)--cycle;
                \draw[thick](4,1)--(4,4);
                \draw[thick](1,4)--(1,5)--(2,5)--(2,4)--(0,4)--(0,5)--(1,5);
                \draw(0.5,4.5)node{A};
                \draw(1.5,4.5)node{A};
                \draw[thick](2,0)--(2,1)--(3,1)--(3,0)--cycle;
                \draw(2.5,0.5)node{A};
                \draw[thick](2,2)--(2,4);
                \draw[thick](1,3)--(1,1)--(2,1);
                \draw[thick](0,4)--(0,0)--(1,0);
                \draw[thick](3,2)--(4,2);
            \GRID
            \end{tikzpicture}
            };%
        \draw(\COLA,-\HT)node{%
        \begin{tikzpicture}[xscale=\SCALE,yscale=\SCALE]
            \draw[thick](3,0)--(3,5)--(5,5)--(5,0)--cycle;
            \draw[thick](4,1)--(4,4);
            \draw[thick](1,4)--(1,5)--(2,5)--(2,4)--(0,4)--(0,5)--(1,5);
            \draw(0.5,4.5)node{A};
            \draw(1.5,4.5)node{A};
            \draw[thick](2,0)--(2,1)--(3,1)--(3,0)--cycle;
            \draw(2.5,0.5)node{A};
            \draw[thick](2,2)--(2,4);
            \draw[thick](1,3)--(1,1)--(2,1);
            \draw[thick](0,4)--(0,0)--(1,0);
            \draw[thick](1,3)--(2,3);
            \GRID
        \end{tikzpicture}
            };%
        \draw(\COLB,\HT)node{%
        \begin{tikzpicture}[xscale=\SCALE,yscale=\SCALE]
            \draw[thick](3,0)--(3,5)--(5,5)--(5,0)--cycle;
            \draw[thick](4,1)--(4,4);
            \draw[thick](1,4)--(1,5)--(2,5)--(2,4)--(0,4)--(0,5)--(1,5);
            \draw(0.5,4.5)node{A};
            \draw(1.5,4.5)node{A};
            \draw[thick](2,0)--(2,1)--(3,1)--(3,0)--cycle;
            \draw(2.5,0.5)node{A};
            \draw[thick](2,2)--(2,4);
            \draw[thick](1,3)--(1,1)--(2,1);
            \draw[thick](0,4)--(0,0)--(1,0);
            \draw[thick](3,2)--(4,2);
            \draw[thick,dotted](3,4)--(5,4);
            \draw[thick,dotted](4,3)--(5,3);
            \draw[thick,dotted](4,2)--(5,2);
            \draw[thick,dotted](3,1)--(5,1);
            \draw[thick,dotted](4,0)--(4,1);
            \draw(3.5,1.5)node{C};
            \draw(3.5,0.5)node{C};
            \draw(4.5,0.5)node{C};
            \draw(4.5,1.5)node{C};
            \draw(4.5,2.5)node{C};
            \draw(4.5,3.5)node{C};
            \GRID
        \end{tikzpicture}
            };%
        \draw(\COLB,-\HT)node{%
        \begin{tikzpicture}[xscale=\SCALE,yscale=\SCALE]
            \draw[thick](3,0)--(3,5)--(5,5)--(5,0)--cycle;
            \draw[thick](4,1)--(4,4);
            \draw[thick](1,4)--(1,5)--(2,5)--(2,4)--(0,4)--(0,5)--(1,5);
            \draw(0.5,4.5)node{A};
            \draw(1.5,4.5)node{A};
            \draw[thick](2,0)--(2,1)--(3,1)--(3,0)--cycle;
            \draw(2.5,0.5)node{A};
            \draw[thick](2,2)--(2,4);
            \draw[thick](1,3)--(1,1)--(2,1);
            \draw[thick](0,4)--(0,0)--(1,0);
            \draw[thick](2,3)--(3,3);
            \draw[thick,dotted](2,5)--(3,5);
            \draw[thick,dotted](0,3)--(2,3);
            \draw[thick,dotted](0,2)--(3,2);
            \draw[thick,dotted](0,1)--(1,1)--(1,0)--(2,0);
            \draw[thick,dotted](2,1)--(2,2);
            \draw[thick,dotted](1,3)--(1,4);
            \draw(0.5,3.5)node{C};
            \draw(1.5,3.5)node{C};
            \draw(0.5,2.5)node{C};
            \draw(1.5,2.5)node{C};
            \draw(2.5,2.5)node{C};
            \draw(0.5,1.5)node{C};
            \draw(1.5,1.5)node{C};
            \draw(2.5,1.5)node{C};
            \draw(0.5,0.5)node{C};
            \draw(1.5,0.5)node{C};
            \GRID
        \end{tikzpicture}
            };%
            \draw(3.7,2)node{$\nearrow$};
            \draw(3.7,-2)node{$\searrow$};
            \draw(11.3,\HT)node{$\to$};
            \draw(11.3,-\HT)node{$\to$};
    \end{tikzpicture}
    \caption{A Dots~\& Boxes position consisting of a $12$-chain
        and a $10$-loop, and two lines of play demonstrating the hard-hearted
    handout}
    \label{FigExamples}
\end{figure}

A sample position appears in figure~\checkedref{FigExamples}.  
During earlier play the author completed
three boxes and placed his initial inside them to mark them as his.  
This makes it looks like he is winning, but 
it is his turn and no good move is available.  
Any move in the loop on the right lets his opponent C
(for ``controller'') 
capture all $10$ boxes there.
Similarly, any move in the 
chain on the left lets her
capture all $12$ of its boxes.  
When a child, the author would have moved in the loop,
expecting her  to capture those $10$ boxes and then open
the $12$-chain for him to capture.
He would
win, $15$-$10$.  

Unfortunately for the author, C knows the 
\emph{hard-hearted handout:} when he
offers her the $10$ boxes, she takes all but~$4$, for example as shown
in the top line of play.
Dotted lines indicate her moves.  Note the $2\times1$
and $1\times2$
rectangles that she could have claimed but chose not to.
Your poor author wins those $4$ boxes but then must move again, opening
the $12$-chain for C to capture entire.
Therefore C wins, $18$-$7$.  
If the author opens the chain instead of the loop
then C takes
all but two of its boxes, as shown in the bottom line of play.  
This is the chain version
of the hard-hearted handout.  In this case the author would lose
$20$-$5$.
(A few people play with a rule that forbids the hard-hearted handout:
  if one can complete a box then one must.  
  But forced greedy strategies make games dull.)

\def\GRID{%
    \draw[fill=black](0,0)circle(\dotsize);
    \draw[fill=black](0,1)circle(\dotsize);
    \draw[fill=black](1,0)circle(\dotsize);
    \draw[fill=black](1,1)circle(\dotsize);
    \draw[fill=black](2,0)circle(\dotsize);
    \draw[fill=black](2,1)circle(\dotsize);
}
\def\COLA{0}
\def\COLB{5}
\def\COLC{8.5}
\def\COLD{13.5}
\def\ROW{2.5}
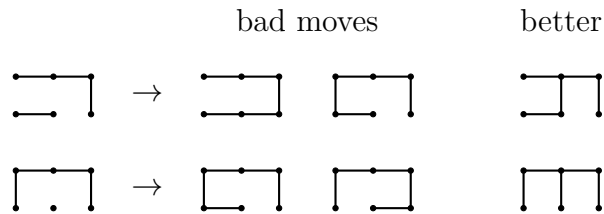
\begin{figure}
    \begin{tikzpicture}[xscale=\SCALE,yscale=\SCALE]
        \draw(\COLA,\ROW)node{%
                \begin{tikzpicture}[baseline=2.5,xscale=\SCALE,yscale=\SCALE]
                    \useasboundingbox(0,0)rectangle(2,1);
                    \draw[thick](0,1)--(2,1)--(2,0);
                    \draw[thick](0,0)--(1,0);
                    \GRID
                \end{tikzpicture}%
            };%
        \draw(\COLB,\ROW)node{%
                \begin{tikzpicture}[baseline=2.5,xscale=\SCALE,yscale=\SCALE]
                    \useasboundingbox(0,0)rectangle(2,1);
                    \draw[thick](0,1)--(2,1)--(2,0);
                    \draw[thick](0,0)--(2,0);
                    \GRID
                \end{tikzpicture}%
            };%
        \draw(\COLC,\ROW)node{%
                \begin{tikzpicture}[baseline=2.5,xscale=\SCALE,yscale=\SCALE]
                    \useasboundingbox(0,0)rectangle(2,1);
                    \draw[thick](0,1)--(2,1)--(2,0);
                    \draw[thick](1,0)--(0,0)--(0,1);
                    \GRID
                \end{tikzpicture}%
            };%
        \draw(\COLD,\ROW)node{%
                \begin{tikzpicture}[baseline=2.5,xscale=\SCALE,yscale=\SCALE]
                    \useasboundingbox(0,0)rectangle(2,1);
                    \draw[thick](0,1)--(2,1)--(2,0);
                    \draw[thick](0,0)--(1,0)--(1,1);
                    \GRID
                \end{tikzpicture}%
            };%
        \draw(\COLA,0)node{%
                \begin{tikzpicture}[baseline=2.5,xscale=\SCALE,yscale=\SCALE]
                    \useasboundingbox(0,0)rectangle(2,1);
                    \draw[thick](0,0)--(0,1)--(2,1)--(2,0);
                    \GRID
                \end{tikzpicture}%
            };%
        \draw(\COLB,0)node{%
                \begin{tikzpicture}[baseline=2.5,xscale=\SCALE,yscale=\SCALE]
                    \useasboundingbox(0,0)rectangle(2,1);
                    \draw[thick](1,0)--(0,0)--(0,1)--(2,1)--(2,0);
                    \GRID
                \end{tikzpicture}%
            };%
        \draw(\COLC,0)node{%
                \begin{tikzpicture}[baseline=2.5,xscale=\SCALE,yscale=\SCALE]
                    \useasboundingbox(0,0)rectangle(2,1);
                    \draw[thick](0,0)--(0,1)--(2,1)--(2,0)--(1,0);
                    \GRID
                \end{tikzpicture}%
            };%
            \draw(\COLD,0)node{%
                \begin{tikzpicture}[baseline=2.5,xscale=\SCALE,yscale=\SCALE]
                    \useasboundingbox(0,0)rectangle(2,1);
                    \draw[thick](0,0)--(0,1)--(2,1)--(2,0);
                    \draw[thick](1,0)--(1,1);
                    \GRID
                \end{tikzpicture}%
            };%
            \draw(2.5,\ROW)node{$\to$};
            \draw(2.5,0)node{$\to$};
            \draw(6.75,4.5)node{bad moves};
            \draw(\COLD,4.5)node{better};
    \end{tikzpicture}
    \caption{Opening a $2$-chain in one of the ``bad'' ways allows
    the opponent to choose between capturing both boxes and then 
    moving again, or replying with the hard-hearted handout.  
    Bisecting the $2$-chain as shown on
the right removes the opponent's second option.}
    \label{FigTwoChains}
\end{figure}

A chain of length~$2$ does not present the same problem,
because if you split it into two single boxes then your opponent 
has no choice but to open the next component.
Two examples appear in figure~\checkedref{FigTwoChains}.
The ``long'' in ``long chain'' restricts attention to 
chains of length $3$ or
more.  In this paper we are restricting attention to 
positions consisting of loops and long chains, so chains of length~$2$
will not appear.  So henceforth we will  write ``chain'' for
long chain, sometimes saying ``long chain'' just for emphasis.

If C always used the hard-hearted handout, then she would 
capture all but~$2$ from each chain and all but $4$ from each loop,
except that she would capture all of the last component.  
If the loops and chains are not too small then
this leads to very lopsided scores.  To first approximation
the game is always lost by whoever has to open the first loop or
long chain.  
This is the connection between Dots~\& Boxes and
Nimstring, and leads to
the \emph{long chain rule} \cite[Ch.~16]{WinningWays}.  
This rule predicts that
the first resp.\ second player will win if there are an even resp.\ odd 
number of long chains in the endgame (assuming an 
odd${ }\times{}$odd board).  So in advanced play, the early game and
midgame are all about trying to create chains, or obstruct their creation,
so that the desired  number (mod~$2$) are formed.  

As in the introduction,
we call the two players the \emph{opener} and \emph{controller}.
Which player is which can change 
during a game. Whoever has no choice but to open
some chain or loop is the opener, and whoever 
replies to a just-opened chain or loop is the controller.
In figure~\checkedref{FigExamples} the author was the opener and his
opponent the controller.  But sometimes
the controller should give up control by taking all of an opened 
chain or loop.  By doing this she
becomes the opener and her opponent becomes the
controller.

For example, if the position consists of $N>1$ chains
of length~$3$, one of which was just opened, 
then a controller who 
keeps control except at the end
will give her opponent $2$ boxes from each chain but the last.  
She scores  $(N-1)+3$ to her opponent's $2(N-1)$, so she loses if $N>4$.
If $N>3$ then
it is better to take all $3$ boxes of the opened
chain.  Being forced to move again,
she is now the opener: she opens a chain and her opponent 
(now the controller) faces a
similar decision.  In one line of optimal play,  the players
take turns giving up control, except that whoever responds in the second-to-last chain
keeps control, giving up $2$ boxes there but getting all~$3$ in the last chain.
If $N$ is even then  the original controller will be the one to do this, and wins
$\frac32N+1$ to $\frac32N-1$.
When $N$ is odd she does not get to do this, but still wins 
$\frac32N+\frac12$ to $\frac32N-\frac12$.

This article is about deciding what to open when you must open something, and deciding
whether to keep control when you have it.

\bigskip
The rest of this section
is more technical.
We have said that 
we will consider only 
Dots~\& Boxes positions $G$ consisting of loops
and long chains, where long means of length${ }\geq3$.  
This is not literally true in two cases.
Immediately after the opener has opened
a component $C$, and whenever the controller 
keeps control, one or two opened components are present.
But we will be able to phrase all our analysis in terms of $G-C$.
So we adopt the convention that unless opened components are 
specifically allowed, Dots~\& Boxes positions consist of
\emph{unopened} loops and long chains.
We also note that
the length of a loop must be even and at least~$4$, 
by the geometry of the grid.

If $G$ is a Dots~\& Boxes position then we define 
\begin{center}
  \begin{tabular}{rl}
      $\size(G)$&number of boxes not yet claimed
    \\
    $\theta(G)$&number of $3$-chains
    \\
    $f(G)$&number of $4$-loops
    \\
      $s(G)$&number of $6$-loops
    \\
      $v(G)$&value---see section~\checkedref{SecIntro} and below
    \\
      $c(G)$&controlled value---see \checkedeqref{EqDefControlledValue} and below
    \\
      $\tb(G)$&terminal bonus---see \checkedeqref{EqTB}
  \end{tabular}
\end{center}
Recall that when discussing $G$ we ignore any boxes captured earlier
in the game, so $\size(G)$ is not the total number of boxes
in the grid, but  the number of
unclaimed boxes.
We call $G$ even or odd according to whether  $\size(G)$ is even or odd.
Usually the symbol
$G$ will represent whatever position is of central interest, so
we will 
use the abbreviations $\theta$, $f$, $s$,
$v$ and $c$ for $\theta(G)$, $f(G)$, $s(G)$, $v(G)$ and $c(G)$.  
We will
still use functional notation for other positions, for example  $\theta(G+3)=\theta+1$.

We recall the meaning of 
the value $v(G)$ from section~\checkedref{SecIntro}:
the opener will lose by $v(G)$ boxes
if both players  maximize the number of boxes they take. 
The trivial example is that $v(G)=\size(G)$ when $G$ consists of a 
single component.  After the opener opens it, the controller
``gives up control'' by claiming all the boxes and ending
the game.
The next example is $v(3+3)=2$.
The opener opens a chain, the controller keeps control (replying
with the hard-hearted handout).  
The opener accepts the $2$ boxes in the handout and 
opens the other chain, all of which the controller claims.
So $v(3+3)=4-2=2$.  

Once the opener has opened a component $C$, the controller
has really only two choices. First, he may keep control and then
play optimally, in which case he will finish the game with
\begin{center}
\begin{tabular}{rcl}
    $(\size(C)-4)+v(G-C)$& if &$C$ is a chain
    \\
    $(\size(C)-8)+v(G-C)$& if &$C$ is a loop
\end{tabular}
\end{center}
more boxes than his opponent.  Second, he may
give up control and then play optimally, in which case 
he will win by
    $\size(C)-v(G-C)$.
The minus sign appears  because when the controller gives up 
control, he enters the position
$G-C$ as the opener, so his opponent is the
one who will score $v(G-C)$.  
We write $v(G;C)$ for the higher of these two margins of victory.
One should think of this as the value of $G$, given that the opener
has just opened~$C$.
Obviously the controller
will choose the higher-scoring option,
leading to
\begin{equation}
    \label{EqvGgivenC}
    v(G;C)=
    \begin{cases}
        (\size(C)-2)+\bigl|v(G-C)-2\bigr|
        &
        \hbox{if $C$ is a chain}
        \\
        (\size(C)-4)+\bigl|v(G-C)-4\bigr|
        &
        \hbox{if $C$ is a loop}
    \end{cases}
\end{equation}
We will use this repeatedly without specific reference.
Another way to express this is that if $v(G-C)=2$ resp.~$4$
and $C$ is a chain resp.\ loop, then keeping and giving up control 
are equally good options.  
If $v(G-C)$ is more than this then
the controller should keep control, and if it is less then he should 
give up control. 

Now consider the opener's perspective,
facing a nonempty position~$G$. He will obviously prefer to open 
whichever component $C$ minimizes $v(G;C)$.  So 
\begin{equation}
    \label{EqValueAsMin}
    v(G)=\min_C v(G;C)
\end{equation}
where $C$ varies over the components of~$G$.  
The previous paragraph shows that each $v(G;C)$
is nonnegative, which proves $v(G)\geq0$.
This justifies our assertion that the opener can never win the endgame.
Also, combining \checkedeqref{EqValueAsMin}
and the previous
paragraph gives a way to evaluate $v(G)$ recursively.  As
an example we work out how to play the position $G=3^n$,
meaning that $G$ 
consists of $n$ many $3$-chains.  The opener has no real choice
about which component to open, so
$v(3^n)=v(3^n;3)=1+|v(3^{n-1})-2|$ whenever $n>0$.  Induction gives
$v(3^n)=0,3,2,1,2,1,2,1,\dots$ when $n=0,1,2,3,4,5,6,7,\dots$.  
So the controller
must
keep control if $n=2$, and must
give up control if either $n=1$ or $n$ is even and larger than~$2$.
In all other cases the choice of keeping or giving up control
makes no difference. 
The simplest rule is to keep control only when $n=2$.

\medskip
Now we can explain the controlled value $c(G)$, defined in 
\checkedeqref{EqDefControlledValue}.
Another useful formula for it is
\begin{equation}
    \label{EqCV}
    c(G) = \sum_i(c_i-4) + \sum_j(l_j-8) + \tb(G)
\end{equation}
where
$c_1,c_2,\dots$ are the lengths of the chains,
$l_1,l_2,\dots$ are the
lengths of the loops, and
the terminal bonus was defined in \checkedeqref{EqTB}.
Ber\-le\-kamp introduced the idea of a controller who follows the 
\emph{control strategy} of always keeping control, except in
a few cases when
he obviously shouldn't.  
In our formulation,
the controlled value is the margin
by which a controller publicly committed to
this strategy will win the endgame $G$ 
against a skilled opener.
The role of the public commitment is to simplify the analysis
of his opponent's strategy.  
As usual, $c(G)<0$ indicates a loss for the controller not a win.

Our formulation of the control strategy is:
keep control until the opener opens the last
component, or opens a loop and only $3$-chains remain.  
In these cases, give up control.
In the second case, play continues after giving up control, 
the
player is now the opener, and only $3$-chains remain.
He  opens them until  he regains control, if he ever
does, 
when he returns to the control
strategy.  

Our  control strategy differs from the one 
in \cite[p.~84]{Berlekamp}
in how the controller plays if he gives
up and then later recovers control.  Although
we don't need it, we remark that this change
doesn't affect the final score.
See lemma~13 in \cite{BC} for the analogue  
of our next result.

\begin{theorem}[Controlled value]
    Suppose $G$ is a Dots \& Boxes position consisting of
    loops and long chains.
    If the controller  follows the
    control strategy, and the opener knows this and plays
    optimally, then the controller
    will win by~$c(G)$.
\end{theorem}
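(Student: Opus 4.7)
The plan is to induct on $\size(G)$. In the base case $G$ is empty or a single component; emptiness is trivial, and for a single chain or loop $C$ the opener must open $C$, the strategy's ``last component'' clause tells the controller to take all of $C$, and the formulas \eqref{EqDefControlledValue}--\eqref{EqTB} give $\size(C) = c(G)$ directly.

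For the inductive step I assume $G$ has at least two components. For each component $C$ the opener might open, let $M_C(G)$ denote the resulting margin when the controller follows the strategy and all subsequent play is optimal. It suffices to show (i) $M_C(G) \geq c(G)$ for every $C$, and (ii) $M_C(G) = c(G)$ for at least one $C$; then $M(G) = \min_C M_C(G) = c(G)$. The split of what the controller does matches the three clauses of the strategy: Case (c) is hard-hearted handout, Case (b) is give-up triggered by a loop opening with only $3$-chains remaining, and Case (a) is give-up on the last component (impossible here since $G$ has $\geq 2$ components).

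In Case (c), writing $d = 2$ or $4$ for the number of boxes handed to the opener, the induction hypothesis applied to $G - C$ yields
\[
M_C(G) = (\size(C) - 2d) + c(G - C) = c(G) + \tb(G - C) - \tb(G),
\]
where the second equality uses \eqref{EqCV} and the identity $\size(C) - 2d \in \{c_C - 4,\ \ell_C - 8\}$. So (i) reduces in Case (c) to $\tb(G - C) \geq \tb(G)$, which I verify by a short split on $\tb(G) \in \{4,6,8\}$: the only transitions where $\tb$ could strictly drop (removing the final loop when only $3$-chains remain, or emptying $G$) are precisely the excluded Cases (b) and (a). In Case (b), $G = C + 3^n$ with $C$ a loop and $n \geq 1$; the controller takes all $\ell_C$ boxes, then plays $3^n$ as a forced opener against an optimally-responding opponent. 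I set up a sub-induction on $n$: let $W(n)$ denote the opponent's optimal margin in $3^n$, and note that the ``give-up'' branch swaps roles into $3^{n-1}$ with the controller back in control mode, where the outer induction hypothesis gives margin $c(3^{n-1}) = 5 - n$. This yields $W(n) = \max(W(n-1) - 1,\ n - 2)$ with $W(1) = 3$, and the uniform bound $W(n) \leq n + 2$ follows immediately; hence $M_C(G) = \ell_C - W(n) \geq \ell_C - n - 2 = c(G)$.

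For the achievability part (ii) I would exhibit an explicit good $C$ by cases on the gross structure of $G$. In most configurations one can pick $C$ so that $\tb(G - C) = \tb(G)$ in Case (c): $G$ with $\geq 2$ loops (open any loop, $\tb = 8$); $G$ containing a chain of length $\geq 4$ (open something else, $\tb = 4$); and $G$ with multiple $3$-chains or multiple loops mixed with $3$-chains (keep the mix alive, $\tb = 6$). The residual structural family is $G = L + 3^n$ with a unique loop, no chains of length $\geq 4$, and $n \geq 1$: when $n = 1$ opening $L$ puts us in Case (b) and $W(1) = 3$ gives $M_L(G) = \ell_L - 3 = c(G)$; when $n \geq 2$ opening a $3$-chain falls back to Case (c) with $\tb$ preserved at $6$, giving equality. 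The main obstacle is organizing this $\tb$ case split cleanly, in particular showing that the exceptional family where Case (c) cannot preserve $\tb$ is precisely the family where Case (b) is available with equality — this is the coincidence that makes the whole argument close up.
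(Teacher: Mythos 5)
Your proof is correct and follows essentially the same route as the paper's: induction, the identity $M_C(G)=c(G)-\tb(G)+\tb(G-C)$ in the keep-control case, the observation that $\tb$ can only drop in the excluded give-up cases, and a separate hand analysis of $G=L+3^n$. The only cosmetic difference is that you package the $L+3^n$ subgame as the recursion $W(n)=\max(W(n-1)-1,\,n-2)$ where the paper parametrizes the opener's play by the number $m$ of turns he keeps control and writes closed-form scores; these are the same computation.
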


\begin{proof}
    Because the roles of controller and opener can change 
    during play, we write $K$ resp.\ $O$ for the player who
    starts out as the controller resp.\ opener. 
    We write $d(G)$ for the amount by which $K$ will win,
    assuming best play
    by~$O$.  We must show $d(G)=c(G)$.  
    If $C$ is
    a component then we write
    $d(G;C)$ for what the final score would be if $O$ opens~$C$
    and then plays optimally.  Obviously $d(G)=\min_C d(G;C)$.

    We induct on the number of components.  
    The base case is that $G$ has one component (or none).
    Then 
    $c=\size(G)=v$.  
    For the inductive step,
    suppose first that $G$ does not consist of a loop and
    (one or more) $3$-chains.
    After $O$ opens  component~$C$,
    $K$ will keep control, 
    so
    \begin{align*}
        d(G;C)&{}=\size(G-C)-
    \begin{pmatrix}
        \hbox{$4$ if $C$ is a chain}
        \\
        \hbox{$8$ if $C$ is a \rlap{loop}\phantom{chain}}
    \end{pmatrix}
        +d(G-C)
        \\
        &{}=c(G)-\tb(G)+\tb(G-C)
    \end{align*}
    We have used the inductive hypothesis
    $d(G-C)=c(G-C)$ and applied  \checkedeqref{EqDefControlledValue} to both
    $G-C$ and~$G$.   This shows that $O$ should open the
    component $C$ that minimizes $\tb(G-C)$.  
    If $C$ is a loop then removing it leaves the terminal bonus
    invariant.  (The only way it could change is if it were the last
    loop and only $3$-chains remain, but we have set that
    case aside.)  
    If $C$ is a chain then removing it increases
    the terminal bonus or leaves it the same.  
    Therefore, if there is a loop 
    then $O$ should choose it as $C$, and
    $$d(G)=d(G;C)=c(G)-\tb(G)+\tb(G-C)=c(G)$$
    If there is no loop then the removal of any chain leaves the terminal
    bonus invariant, and the same calculation shows $d(G)=c(G)$.

    It remains to consider the case that $G$ consists of a loop~$L$
    and  $n\geq1$ many $3$-chains.  We write $l$ for the length
    of the loop.
    Our argument for $d(G;C)$ still applies if $C=3$, namely
    $$
    d(G;3)=c(G)-\tb(G)+\tb(G-3)
    =
    \begin{cases}
        l-1&\hbox{if $n=1$}
        \\
        l-n-2&\hbox{if $n>1$}
    \end{cases}
    $$
    The key point is to compute $d(G;L)$.  If $O$ opens $L$, then $K$
    will give up control, and then $O$ may keep control for however many 
    turns~$m\leq n$
    he desires.  In fact $O$ will choose $m<n$ because giving away two
    boxes of the last component would be silly.
    After $O$ gives up control, $K$ will keep it until 
    the last chain.  This leads to 
    $$
    \hbox{final score}=
    \begin{cases}
        l+n-4&\hbox{if $m=n-1$}
        \\
        l+2m-n-2&\hbox{if $0\leq m<n-1$}
    \end{cases}
    $$
    If $n>1$ then all these values are${}\geq d(G;3)$, so
    $O$ should open the $3$-chain and $d(G)=d(G;3)=c(G)$.  If $n=1$
    then $m$ must be~$0=n-1$, so $d(G;L)=l-3$.  
    Since this is less than $v(G;3)=l-1$, $O$ should open the loop. And 
    $d(G)=d(G;L)=l-3=c(G)$.
\end{proof}

We close this section with some relations between the 
various quantities we have introduced.  

\begin{lemma}
    \label{LemTrivRlns}
    Suppose a Dots~\& Boxes position $G$ consists of loops and long chains.
Then
\begin{enumerate}
    \item
        \label{ItemCatmostV}
        $c(G)\leq v(G);$
    \item
        \label{ItemVCSizemod2}
        $\size(G)\cong c(G)\cong v(G)\cong v(G;C)$ mod $2$,
        for every component $C$ of $G$;
    \item
        \label{ItemCSizeMod4}
    $\size(G)\cong c(G)$ mod $4$ if $G$ has no $3$-chains.
    \qed
\end{enumerate}
\end{lemma}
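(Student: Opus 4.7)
The plan is to treat the three parts in order. Part \eqref{ItemCatmostV} is immediate from the Controlled value theorem just proved: that theorem guarantees the controller the margin $c(G)$ against optimal opposition by committing to the control strategy, and since $v(G)$ is the margin under optimal play by both sides, optimal play for the controller does at least as well as any fixed strategy. This forces $c(G)\le v(G)$.

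For \eqref{ItemVCSizemod2}, I would begin by reading $c(G)\cong\size(G)$ mod~$2$ directly off \eqref{EqDefControlledValue}: the three correction terms $-4(\text{\# chains})$, $-8(\text{\# loops})$, and $\tb(G)\in\{0,4,6,8\}$ are all even. For the $v$-side I induct on the number of components of $G$, with the trivial base case $v(G)=\size(G)$ when $G$ has at most one component. In the inductive step, \eqref{EqvGgivenC} shows that the parity of $v(G;C)$ equals that of $\size(C)+v(G-C)$ (since $|x-2|\cong x$ mod~$2$); by the inductive hypothesis $v(G-C)\cong\size(G-C)$ mod~$2$, so $v(G;C)\cong\size(C)+\size(G-C)=\size(G)$ mod~$2$. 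Finally $v(G)=\min_C v(G;C)$ inherits this common parity.

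For \eqref{ItemCSizeMod4}, the hypothesis that $G$ has no $3$-chains excludes the third branch of \eqref{EqTB}, so $\tb(G)\in\{0,4,8\}$ is divisible by $4$; the remaining correction terms in \eqref{EqDefControlledValue} are visibly multiples of $4$ as well, so reducing modulo $4$ yields $c(G)\cong\size(G)$ mod~$4$. No step here presents real difficulty; the only thing worth checking is that the induction in \eqref{ItemVCSizemod2} is set up correctly, and it is, because $G-C$ strictly reduces the component count.
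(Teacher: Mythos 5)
Your proof is correct and takes essentially the same route as the paper's: part \eqref{ItemCatmostV} via the controller's public commitment to the control strategy, and part \eqref{ItemCSizeMod4} via divisibility by $4$ of every correction term in \eqref{EqDefControlledValue}. The only (minor) divergence is in part \eqref{ItemVCSizemod2}, where the paper obtains the parities of $c(G)$ and $v(G)$ in one stroke by observing that any margin of victory has the same parity as $\size(G)$, and then handles $v(G;C)$ by the same recursion you use; your explicit induction on the number of components reproves the same facts and is equally sound.
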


We also mention two more relations that we will prove
later.  
First,
adding the hypothesis that $G$ is even to part \checkedeqref{ItemCSizeMod4}
strengthens its conclusion to 
$v(G)\cong\size(G)\cong c(G)$ mod~$4$.  (See theorem~\ref{ThmValuesExplicit}.)
Second, our theorem~\checkedref{ThmCAtLeast2}
contains Berlekamp's result 
$c(G)\geq2\implies c(G)=v(G)$.  

\begin{proof}
    \checkedeqref{ItemCatmostV} The controller may guarantee a final score of
    at least $c(G)$ by publicly committing to the control strategy.

    \checkedeqref{ItemVCSizemod2} Both $c(G)$ and $v(G)$ have the same
    parity as $\size(G)$ because they are margins of victory under
    certain lines of play.  For the last congruence, it follows from
    \checkedeqref{EqvGgivenC} that $v(G;C)$ has the same or different parity
    as $v(G-C)$
    according to whether $\size(C)$ is even or odd.  
    Also,  $\size(G)$ has the same or different parity as
    $\size(G-C)$ under the same conditions.  Since
    $\size(G-C)$ and $v(G-C)$ have the same parity, it follows that
    $v(G;C)\cong\size(G)$ mod~$2$.

    \checkedeqref{ItemCSizeMod4} When no $3$-chains are present the terminal
    bonus is divisible by~$4$.  So our claim follows from the 
    definition \checkedeqref{EqDefControlledValue} of $c(G)$.
\end{proof}

\section{Opener strategy}
\label{SecOpener}

\noindent
Our goal in this section is to prove
theorem~\checkedref{ThmOpener}, under the
standing hypothesis is that $G$
is a Dots~\& Boxes position consisting of
unopened loops and long chains.
Although our proof is logically independent of the
work of Buzzard and Ciere \cite{BC},
and organized very differently, we would
not have been able to formulate theorem~\checkedref{ThmOpener}
or the lemmas below
without reference to it.
We regret that we do not use 
their lovely man-in-the-middle and chain- and loop-amalgamation
arguments.

\begin{lemma}[Tiny positions]
  \label{LemTinyPositions}
  Suppose $G$ has one component, or consists of a $3$-chain and a loop.  Then $v(G)=c(G)$.
  In the second case, opening the loop is optimal.
\end{lemma}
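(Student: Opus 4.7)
The plan is to verify both claims by direct computation, handling the two cases separately. All the needed machinery is already in place: the recursion \checkedeqref{EqvGgivenC} for $v(G;C)$, the minimization \checkedeqref{EqValueAsMin} defining $v(G)$, and the definitions \checkedeqref{EqDefControlledValue} and \checkedeqref{EqTB} of $c(G)$ and $\tb(G)$. Since the positions in question have at most two components, there is no induction to manage and no real case explosion---just careful bookkeeping of the terminal bonus.

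First I would dispose of the one-component case. If $G$ is a single long chain of length $c_1$, the opener has no choice and the controller must give up control by taking all the boxes (keeping control would leave no component to be given away in the handout), so $v(G)=c_1=\size(G)$. On the other hand, $\tb(G)=4$ by \checkedeqref{EqTB}, so $c(G)=c_1-4+4=c_1$. If instead $G$ is a single loop of length $l_1$, the same argument gives $v(G)=l_1$, and now $\tb(G)=8$, so $c(G)=l_1-8+8=l_1$. In both subcases $v(G)=c(G)$.

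Next I would handle $G=3+L$ with $L$ a loop of length $l\geq 4$. Here $\tb(G)=6$ and $c(G)=(3-4)+(l-8)+6=l-3$. Applying \checkedeqref{EqvGgivenC} and the single-component values already established,
\begin{align*}
  v(G;3)&=(3-2)+\bigl|v(L)-2\bigr|=1+(l-2)=l-1,\\
  v(G;L)&=(l-4)+\bigl|v(3)-4\bigr|=(l-4)+1=l-3.
\end{align*}
By \checkedeqref{EqValueAsMin}, $v(G)=\min(l-1,\,l-3)=l-3=c(G)$, and the minimum is attained by opening $L$, which is therefore optimal.

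There is no real obstacle; the only thing worth flagging is the need to use the correct case of \checkedeqref{EqTB} in each subcase (value $4$, $6$, or $8$ depending on whether loops and/or $3$-chains are present), since the whole point of the terminal bonus is precisely to make the equality $v=c$ hold for these base positions.
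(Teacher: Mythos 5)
Your proof is correct and takes essentially the same route as the paper, which simply notes that the one-component case is obvious and that the two-component case follows by examining the four possible lines of play (the two options in each of $v(G;3)$ and $v(G;L)$, which your use of \eqref{EqvGgivenC} encodes). Your computations $v(G;3)=l-1$ and $v(G;L)=l-3=c(G)$ match the paper's conclusion that opening the loop is optimal and the $3$-chain is not.
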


\begin{proof}
  The first case is obvious.  For the second case one examines the four possible lines of play.
  (Opening the $3$-chain is not optimal.)
\end{proof}

The next theorem summarizes the key points of the lemma after it.
After establishing it
we will use the implication $(c\geq2)\implies(v=c)$ many
times without specific
reference.  
The lemma and its proof are essentially the same as
lemma~15 from \cite{BC}, that one can ``fly the plane
without crashing'' in the sense of \cite{BS}.

\begin{theorem}[Large controlled values]
  \label{ThmCAtLeast2}
  Suppose $c(G)\geq2$. Then $v(G)=c(G)$ and the following gives an optimal move:
  \begin{enumerate}
    \item
      Open a $3$-chain  (if $G$ has one and at least one other chain).
    \item
      Otherwise, open a shortest loop (if $G$ has a loop).
    \item
      Otherwise, open a shortest chain.
      \hfill$\Box$
  \end{enumerate}
\end{theorem}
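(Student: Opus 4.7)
The plan is to induct on the number of components of $G$, with the base cases (a single component, or $G=3+L$) handled by Lemma~\ref{LemTinyPositions}. For the inductive step I will verify that the move $C$ prescribed by the theorem satisfies $v(G;C)=c(G)$. Combined with $c(G)\le v(G)\le v(G;C)$, coming from Lemma~\ref{LemTrivRlns}\eqref{ItemCatmostV} and \eqref{EqValueAsMin}, this yields $v(G)=c(G)$ and confirms optimality of the recommended move.

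Each of the three cases runs on a common recipe. Using \eqref{EqCV} and a short check of how the terminal bonus changes when one removes $C$, I compute $c(G-3)=c(G)+1$ in case~(1) (in all subcases $\tb(G-3)=\tb(G)$, since removing a $3$-chain preserves whichever terminal-bonus class $G$ belongs to), $c(G-L)=c(G)+8-l$ in case~(2) with $l=\size(L)$, and $c(G-C)=c(G)+4-\size(C)$ in case~(3). After checking $c(G-C)\ge2$, the inductive hypothesis (or Lemma~\ref{LemTinyPositions}, if $G-C$ is a base position) gives $v(G-C)=c(G-C)$. Formula \eqref{EqvGgivenC} then yields, respectively, $v(G;3)=1+|c(G)-1|$, $v(G;L)=(l-4)+|c(G)+4-l|$, and $v(G;C)=(\size(C)-2)+|c(G)+2-\size(C)|$. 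Each collapses to $c(G)$ provided the quantity inside the absolute value is nonnegative, i.e.\ $c(G)\ge1$, $c(G)\ge l-4$, and $c(G)\ge\size(C)-2$ respectively. The first is immediate from $c(G)\ge2$.

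The main obstacle is the remaining two inequalities, and the key input is that $L$ is a shortest loop in case~(2) and $C$ a shortest chain in case~(3), so every loop in $G$ contributes $\ge l-8$ and every chain $\ge\size(C)-4\ge0$ to the sum in \eqref{EqCV}. Splitting $G$ by its terminal-bonus type (only loops; loops with a single $3$-chain; loops with long chains; only long chains) produces in each subcase a lower bound on $c(G)$ that dominates $l-4$ or $\size(C)-2$. For small shortest lengths ($l\in\{4,6\}$ or $\size(C)=4$) the inequality reduces to $c(G)\ge2$; for longer shortest components, the forced contributions of the other components push $c(G)$ comfortably past the required threshold (e.g.\ $c(G)\ge 2(l-8)+\tb(G)$ in the only-loops subcase dominates $l-4$ as soon as $l\ge 4$). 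This bookkeeping, together with the parallel verification that $c(G-C)\ge2$ so the induction closes, is the bulk of the work, though each individual subcase reduces to a one-line arithmetic check.
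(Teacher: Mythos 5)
Your proposal is correct and follows essentially the same route as the paper: induct on the number of components, handle the single-component and $3+L$ positions via Lemma~\ref{LemTinyPositions}, and otherwise show $v(G;C)=c(G)$ for the prescribed $C$ by checking that the terminal bonus is preserved and that $c(G-C)$ clears the needed threshold (your inequality $c(G)\geq l-4$ is exactly the paper's condition $c(G-C)\geq 4$ for loops, and likewise for chains). The only cosmetic difference is that the paper packages the criterion ``$\tb$ unchanged and $c(G-C)\geq 2$ resp.\ $4$'' as a standalone sub-lemma before verifying it for each recommended move, whereas you verify the cases directly.
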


\begin{lemma}[Large controlled values---details]
  \label{LemCAtLeast2}
  Suppose $c(G)\geq2$.  
  \begin{enumerate}
    \item
      \label{ItemCAtLeast2OptimalComponent}
      If $C$ is any component satisfying
      \begin{equation*}
        \tb(G-C)=\tb(G)
        \hbox{ \ and \ }
        c(G-C)\geq
        \begin{cases}
          2&\hbox{if $C$ is a cycle}
          \\
          4&\hbox{if $C$ is a loop}
        \end{cases}
      \end{equation*}
      then opening $C$ is optimal.  
    \item
      \label{ItemCAtLeast2MoveIn3}
      If $G$ has a $3$-chain, $4$-loop or $6$-loop, whose removal 
      does not alter the terminal bonus, then
      opening it is optimal.
    \item
      \label{ItemCAtLeast2MoveInLoop}
      Suppose $G$ has no $3$-chains, or exactly one $3$-chain and no other chains.
      If  $G$ has a loop 
      then opening a
      shortest loop is optimal.
    \item
      \label{ItemCAtLeast2MoveInChain}
      If $G$ has no loops, then opening a shortest chain is optimal.
    \item
      \label{ItemCAtLeast2Value}
      $v(G)=c(G)$.
  \end{enumerate}
\end{lemma}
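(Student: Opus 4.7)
The plan is to prove all five parts together by strong induction on $\size(G)$. In the base cases, where $G$ has a single component or is a $3$-chain plus a single loop, the conclusions $v(G)=c(G)$ and the indicated optimal move both come directly from lemma~\ref{LemTinyPositions}. For the inductive step I treat part~\ref{ItemCAtLeast2OptimalComponent} as the engine and then check that a suitable component exists in each case of parts~\ref{ItemCAtLeast2MoveIn3}--\ref{ItemCAtLeast2MoveInChain}; part~\ref{ItemCAtLeast2Value} will follow automatically.

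For part~\ref{ItemCAtLeast2OptimalComponent}, given a component $C$ meeting its hypotheses, the inductive hypothesis yields $v(G-C)=c(G-C)$, which is at least $2$ if $C$ is a chain and at least $4$ if $C$ is a loop. In either case the absolute value in \eqref{EqvGgivenC} collapses to a straight subtraction, and a short calculation using \eqref{EqDefControlledValue} applied to both $G$ and $G-C$, together with $\tb(G-C)=\tb(G)$, gives $v(G;C)=c(G)$. Combined with the inequalities $c(G)\leq v(G)\leq v(G;C)$ from lemma~\ref{LemTrivRlns}\ref{ItemCatmostV} and \eqref{EqValueAsMin}, this both forces $v(G)=c(G)$ (establishing part~\ref{ItemCAtLeast2Value}) and certifies optimality of $C$.

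Part~\ref{ItemCAtLeast2MoveIn3} is then routine: with $\tb$ unchanged, removing a $3$-chain boosts $c$ by $1$, removing a $4$-loop by $4$, and removing a $6$-loop by $2$, so in each case the threshold required by part~\ref{ItemCAtLeast2OptimalComponent} is met. For part~\ref{ItemCAtLeast2MoveInLoop}, I will open the shortest loop $L$, of length $l$; the hypothesis on $3$-chains ensures that removing $L$ preserves $\tb$, after a sub-split on whether a second loop remains (the single-loop sub-case falling back to the base). The requirement $c(G-L)\geq 4$ reduces to $c(G)\geq l-4$, which for $l\in\{4,6\}$ is immediate from $c(G)\geq 2$, and for $l\geq 8$ follows from the structural estimate
\[
c(G)\ \geq\ k(l-8)+\tb(G)\ \geq\ (l-8)+4\ =\ l-4,
\]
using that $G$ has $k\geq 1$ loops each of length at least $l$, each chain contributes nonnegatively, and $\tb(G)\geq 4$. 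Part~\ref{ItemCAtLeast2MoveInChain} is analogous and easier since only chains are present.

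The main obstacle is verifying that parts~\ref{ItemCAtLeast2MoveInLoop} and~\ref{ItemCAtLeast2MoveInChain} cover every residual configuration once part~\ref{ItemCAtLeast2MoveIn3} is exhausted, and in particular that the delicate case of one $3$-chain with several loops is correctly handled by opening a shortest loop: there the $3$-chain's removal would change $\tb$ from $6$ to $8$ and so lie outside part~\ref{ItemCAtLeast2MoveIn3}, whereas a shortest loop's removal preserves $\tb=6$ as long as another loop remains. A related subtlety is that the inequality $c(G)\geq l-4$ for long shortest loops cannot be derived from the bare hypothesis $c(G)\geq 2$, and must instead come from the structural lower bound above.
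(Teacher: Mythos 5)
Your proposal is correct and takes essentially the same route as the paper: induction with part~(1) as the engine (the identical computation collapsing the absolute value in \eqref{EqvGgivenC} to get $v(G;C)=c(G)$, then squeezing via $c\leq v\leq v(G;C)$), part~(2) via the shifts $+1$, $+4$, $+2$ of $c$, parts~(3)--(4) via structural lower bounds from \eqref{EqCV} that guarantee the thresholds (the paper bounds $c(G-C)$ directly rather than bounding $c(G)$ and subtracting $l-8$, a cosmetic difference), and part~(5) falling out of part~(1) plus the tiny-positions lemma. One small inaccuracy: in part~(3) your claim that ``each chain contributes nonnegatively'' fails when the single permitted $3$-chain is present (it contributes $-1$), but in that subcase $\tb(G)=6$ rather than $4$, so the needed inequality $c(G)\geq l-4$ still holds --- this is precisely the case split the paper makes explicit.
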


\begin{proof}
  We  induct on the number of components.  If $G$ has one component then 
  \checkedeqref{ItemCAtLeast2OptimalComponent}--\checkedeqref{ItemCAtLeast2MoveIn3} are vacuous and
  \checkedeqref{ItemCAtLeast2MoveInLoop}--\checkedeqref{ItemCAtLeast2Value} are trivial.
  So suppose $G$ has at least
  two components.  

  \checkedeqref{ItemCAtLeast2OptimalComponent}  
  Write $l$ for the length of $C$, and 
  assume first that $C$ is a loop.  We have
  \begin{equation*}
    v(G;C)=(l-4)+|v(G-C)-4|
  \end{equation*}
  Since $c(G-C)\geq4$ by the hypothesis of 
  \checkedeqref{ItemCAtLeast2OptimalComponent}, induction gives $v(G-C)=c(G-C)\geq4$. So
  we have
  \begin{equation*}
    v(G;C)=(l-4)+c(G-C)-4= (l-8)+c(G-C)
  \end{equation*}
  Since $G$ and $G-C$ have the same terminal bonus, $c(G-C)=c(G)-(l-8)$. Plugging this
    in gives $v(G;C)=c(G)$.  Together with 
    lemma~\ref{LemTrivRlns}
    this gives
    $$
    c(G)\leq v(G)\leq v(G;C)=c(G)
    $$ 
    Therefore $v(G;C)=v(G)$, which 
    proves optimality, and $v(G)=c(G)$.
  If $C$ is a chain then the argument is the same with all $4$'s replaced by $2$'s 
  and all $8$'s by~$4$'s.
  
  \checkedeqref{ItemCAtLeast2MoveIn3} First suppose $G$ has a $3$-chain with $\tb(G-3)=\tb(G)$.  Then
  $c(G-3)=1+c(G)\geq3$.  So \checkedeqref{ItemCAtLeast2OptimalComponent} applies to the $3$-chain.
  And similarly for a $4$-loop or $6$-loop.

  \checkedeqref{ItemCAtLeast2MoveInLoop} 
  By hypothesis,  $G$ consists of  either at least one
  loop and a $3$-chain, or at least one loop and possibly some chains 
  of length${ }\geq4$.
  Let $C$ be a shortest loop.  
  In the special case  $G=C+3$, opening
  $C$ is optimal by lemma~\checkedref{LemTinyPositions}.   So suppose otherwise: $G$ has a second loop, or
  a chain of length${ }\geq4$.  In either case the removal of $C$ leaves the terminal bonus
  invariant.  
  If $C=4_\ell$ or $6_\ell$ then we are done by \checkedeqref{ItemCAtLeast2MoveIn3}.
  There are two remaining cases.
  First, $G$ consists of a $3$-chain and two or more
  loops of length${ }\geq8$. Second, $G$ consists of at least one  loop of length${ }\geq8$ and 
  possibly some chains 
  of length${ }\geq4$.  
  In each case we write out  formula~\checkedeqref{EqCV} for
  $c(G-C)$ and use the absence of $4$- and $6$-loops.
  The results in the two cases are
  \begin{align*}
    c(G-C) & { }=(3-4)+(\hbox{nonnegative terms})+6\geq5
    \\
    \llap{and}\quad
    c(G-C) & { }= (\hbox{nonnegative terms})+(\hbox{$4$ or $8$}) \geq4
  \end{align*}
  So \checkedeqref{ItemCAtLeast2OptimalComponent} shows that opening $C$ is optimal.
  
  \checkedeqref{ItemCAtLeast2MoveInChain} $G$ consists of at least $2$ chains, so removing any one of them
  leaves the terminal bonus unchanged.  Let $C$ be  shortest possible.  If it is a $3$-chain then we
  appeal to \checkedeqref{ItemCAtLeast2MoveIn3}.  Otherwise we mimic the argument at the end of the proof of \checkedeqref{ItemCAtLeast2MoveInLoop}:
  \begin{equation*}
  c(G-C) = 4 + (\hbox{nonnegative terms})\geq4
  \end{equation*}
  Since the right side is at least~$2$, it follows from 
  \checkedeqref{ItemCAtLeast2OptimalComponent} that opening $C$ is optimal.

  \checkedeqref{ItemCAtLeast2Value} One of \checkedeqref{ItemCAtLeast2MoveIn3}--\checkedeqref{ItemCAtLeast2MoveInChain}
  applies to $G$.  Their proofs show that either $G$ consists of a $3$-chain and a loop, or
  else $G$ has a component $C$ satisfying~\checkedeqref{ItemCAtLeast2OptimalComponent}.  In the first
  case we appeal to lemma~\checkedref{LemTinyPositions} for the equality $v=c$.  In the second case
  we observed $v=c$ during the proof of 
  \eqref{ItemCAtLeast2OptimalComponent}.
\end{proof}

\begin{lemma}[Example of $6_\ell$-optimality]
  \label{Lem6Loptimality}
  Suppose $G\neq\emptyset$ has no $3$-chains or $4$-loops, and $c(G)\leq4$.  Then 
      \begin{equation*}
        v(G) =
        \begin{cases}
          \phantom{\hbox{\rm(whichever of $0,4$)}}
          3&\hbox{if $\size(G)$ is odd}
          \\
          \hbox{\rm(whichever of $2,4$)${ }\cong\size(G)$ mod~$4$}&\hbox{otherwise}
        \end{cases}
      \end{equation*}
      Furthermore, if $c(G)<2$ then 
      $G$ has at least two $6$-loops and  opening one
      of them is optimal.
\end{lemma}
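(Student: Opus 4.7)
The plan is to induct on the number of components of $G$. The base case is nearly vacuous: the combined constraints $c(G)\leq 4$, no $3$-chains, no $4$-loops, $G\neq\emptyset$ force a one-component $G$ to be a single $4$-chain, where $v(G)=4=\size(G)$ matches the formula and the ``$c(G)<2$'' clause is empty. For the inductive step I split on whether $c(G)\geq 2$. In the range $c(G)\in\{2,3,4\}$, Theorem~\ref{ThmCAtLeast2} gives $v(G)=c(G)$, and the absence of $3$-chains lets Lemma~\ref{LemTrivRlns}\eqref{ItemCSizeMod4} pin down $c(G)\equiv\size(G)\pmod 4$. So $c(G)$ equals $3$, $2$, or $4$ in the odd, $\size\equiv 2$, and $\size\equiv 0$ residue classes, matching the formula.

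When $c(G)<2$, I first read off the two-$6$-loops claim from formula~\eqref{EqCV}: chains contribute ${}\geq 0$, loops contribute ${}\geq -2$ with equality exactly for $6$-loops, and $\tb\in\{4,8\}$, so $c(G)\leq 1$ forces at least two (if $\tb=4$) or four (if $\tb=8$) $6$-loops. Picking a $6$-loop $C$, the absence of $3$-chains implies the terminal bonus is preserved when $C$ is removed (``all loops'' vs.\ ``has a chain'' is unchanged), so $c(G-C)=c(G)+2<4$. I then compute $v(G-C)$ via Theorem~\ref{ThmCAtLeast2} if $c(G-C)\geq 2$, and by induction otherwise; in either case $v(G-C)$ matches the formula applied to $G-C$. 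Substituting into $v(G;C)=2+\bigl|v(G-C)-4\bigr|$ yields $3$, $4$, or $2$ in the odd, $\size\equiv 0$, and $\size\equiv 2\pmod 4$ subcases respectively, matching the formula for~$G$.

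The main obstacle is proving optimality, i.e.\ that $v(G;C')\geq v(G;C)$ for every other component $C'$. The crude bound $v(G;C')\geq \size(C')-a\geq 2$ (with $a=2$ for chains, $4$ for loops), combined with the parity $v(G;C')\equiv\size(G)\pmod 2$ from Lemma~\ref{LemTrivRlns}\eqref{ItemVCSizemod2}, immediately dispatches the odd (value $3$) and $\size\equiv 2\pmod 4$ (value $2$) subcases. The hard subcase is $\size(G)\equiv 0\pmod 4$, where we need $v(G;C')\geq 4$. Only two kinds of $C'$ could conceivably give $v(G;C')=2$: a $4$-chain with $v(G-C')=2$, or a $6$-loop with $v(G-C')=4$. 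In the first, $\size(G-C')\equiv 0\pmod 4$ forces $v(G-C')\in\{4,8,\dots\}$ (by induction if $c(G-C')\leq 4$, or by Theorem~\ref{ThmCAtLeast2} combined with Lemma~\ref{LemTrivRlns}\eqref{ItemCSizeMod4} otherwise); in the second, $\size(G-C')\equiv 2\pmod 4$ forces $v(G-C')\in\{2,6,\dots\}$. Neither option is available, so $v(G;C')\neq 2$, hence $v(G;C')\geq 4$ by parity. This last parity-sieve argument is the only place where the proof has real content; everything else is bookkeeping.
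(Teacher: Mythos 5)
Your proof is correct and follows essentially the same route as the paper: induction on the number of components, deducing $s\geq 2$ from \eqref{EqCV}, computing $v(G;6_\ell)$ from the inductive value of $G-6_\ell$, and then the parity sieve for optimality whose only nontrivial case is $\size(G)\cong 0$ mod~$4$, where a hypothetical $4$-chain with $v(G-4)=2$ is killed by the congruence $v(G-4)\cong\size(G-4)\cong 0$ mod~$4$. The only differences are organizational (you take a one-component base case where the paper takes three components, and you also rule out a competing $6$-loop, which the paper sidesteps by only comparing against components $C\neq 6_\ell$).
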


\begin{proof}
  We write $w(\cdot)$ for the function on positions defined by the formula.
  First we establish the cases $c=2,3,4$.  
    By the absence of $3$-chains, lemma~\ref{LemTrivRlns}\eqref{ItemCSizeMod4}
    gives $\size(G)\cong c$ mod~$4$. So the definition of $w$
  gives  $w(G)=c$.  And theorem~\checkedref{ThmCAtLeast2} gives
  $v=c$, completing the proof when $c\geq2$.

  So 
  we may suppose $2>c$, which we write out using \checkedeqref{EqCV}
  \begin{equation*}
    2>c=-\theta-4f-2s+(\hbox{nonnegative terms})+\tb(G)
  \end{equation*}
  By $\tb(G)\geq4$ and $\theta=f=0$ we get $s\geq2$.  
  Since there are at least two loops,
  the removal of any loop leaves the
  terminal bonus unchanged.  
  In particular, $c(G-6_\ell)=c+2$. Also, 
 $G$ has a third
    component because $c(G)<2$ and 
    $c(6_\ell+6_\ell)=4$.  Having made these preparations,
  we now induct on the number of components.  
  
  First we claim $v(G-6_\ell)=w(G-6_\ell)$.  
  If $c(G-6_\ell)\geq2$ then $c(G-6_\ell)=2$ or~$3$, which are cases already proven.
  This  includes the base case that $G$ has $3$ components,
  because 
then $G-C=6_\ell+C'$ with $C'\neq3,4_\ell$, which forces
    $c(G-C)\geq2$.
  On the other hand, if $c(G-6_\ell)<2$ then induction gives $v(G-6_\ell)=w(G-6_\ell)$.

  Next we claim $v(G;6_\ell)=w(G)$.  The previous paragraph gives us
  \begin{equation}
      \label{EqValueRel6L}
    v(G;6_\ell)=2+|w(G-6_\ell)-4|=
    \begin{cases}
      3&\hbox{if $\size(G-6_\ell)$ is odd}
      \\
      2&\hbox{if $\size(G-6_\ell)\cong4$ mod~$4$}
      \\
      4&\hbox{if $\size(G-6_\ell)\cong2$ mod~$4$}
    \end{cases}
  \end{equation}
  Because the sizes of $G$ and $G-6_\ell$ differ by $2$ mod~$4$, the right side is 
  $w(G)$, proving the claim.  Once we prove that opening a $6$-loop is optimal
  it will follow that $v=v(G;6_\ell)=w(G)$ and the induction will be complete.
  
  Suppose that some component $C\neq6_\ell$ 
  has $v(G;C)<v(G;6_\ell)$.  
  We have 
  \begin{equation}
      \label{EqInequalityArgument}
    2\leq v(G;C) \mathop{\cong}_{\mathrm{mod}\,2}
    v(G;6_\ell)\leq4
  \end{equation}
  The first inequality comes from
    \eqref{EqvGgivenC}
    because $C$ is a chain of length${ }\geq4$
      or a loop of length${ }\geq8$.  
      The congruence is 
      Lemma~\checkedref{LemTrivRlns}\checkedeqref{ItemVCSizemod2},
  and we saw $v(G;6_\ell)\leq4$ in the previous paragraph.
This forces 
\begin{equation}
    \label{EqInequalityArgument2}
v(G;C)=2
\quad\hbox{and}\quad 
v(G;6_\ell)=4
.
\end{equation}
    Together with \eqref{EqvGgivenC},
    the first shows that $C$ is a $4$-chain with $v(G-4)=2$.
    Together with \checkedeqref{EqValueRel6L}, the second
    shows that $\size(G)\cong4$ mod~$4$.

    This is impossible because $v(G-4)\cong4$ mod~$4$:  by induction
    if $c(G-4)<2$,  or by 
    \begin{equation*}
      v(G-4)=c(G-4)\cong\size(G-4)\cong4\hbox{ mod~$4$}
    \end{equation*}
    if $c(G-4)\geq2$.  (The first congruence is lemma~\checkedref{LemTrivRlns}.)
\end{proof}

We will use the argument for \checkedeqref{EqInequalityArgument}
and \checkedeqref{EqInequalityArgument2}
several more times, without giving the details each time.

\begin{lemma}[First example of $3$-optimality]
    \label{Lem3Optimality1}
  Suppose $c(G)<2$ and that $G$ has no $4$-loops, but does have a $3$-chain.  
  Then opening a $3$-chain is optimal, and
  \begin{equation*}
    v(G)=
    \begin{cases}
      2&\hbox{if $G$ is even}
      \\
      3&\hbox{if $\theta=1$ and $\size(G)\cong3$ mod~$4$}
      \\
      1&\hbox{otherwise}
    \end{cases}
  \end{equation*}
\end{lemma}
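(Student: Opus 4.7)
The plan is to compute $v(G;3) = 1 + |v(G-3) - 2|$ explicitly and verify it matches the claimed formula; optimality then follows from an easy lower bound on $v(G;C)$ for other components $C$. I induct on the number of components of $G$. No separate base case computation is needed, since a lone $3$-chain has $c = 3$, violating the hypothesis $c < 2$.

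First I track how removing a $3$-chain changes the controlled value. If $\theta \geq 2$, the terminal bonus $\tb$ is unchanged and $c(G-3) = c(G) + 1$. If $\theta = 1$, then $G$ must contain a loop (otherwise $\tb = 4$ forces $c \geq 3$, contradicting $c < 2$), so $\tb$ jumps from $6$ to $8$ and $c(G-3) = c(G) + 3$. Using $c(G) \leq 1$ (from $c \cong \size$ mod~$2$ and $c < 2$), these bound $c(G-3) \leq 2$ or $c(G-3) \leq 4$ respectively. Three subcases now cover everything: if $c(G-3) \geq 2$, Theorem~\ref{ThmCAtLeast2} gives $v(G-3) = c(G-3)$; if $c(G-3) < 2$ and $\theta \geq 2$, the inductive hypothesis applies to $G - 3$, which still satisfies all the hypotheses of the lemma; and if $c(G-3) < 2$ with $\theta = 1$, then $G - 3$ has no $3$-chains and no $4$-loops, so Lemma~\ref{Lem6Loptimality} computes $v(G-3)$ from $\size(G-3)$ mod~$4$. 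In every subcase, substituting $v(G-3)$ into $v(G;3) = 1 + |v(G-3) - 2|$ and tracking $\theta(G-3)$ and $\size(G)$ mod~$4$ reproduces exactly the three claimed branches.

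For optimality, any component $C \neq 3$ is either a chain of length ${} \geq 4$ or, since $G$ has no $4$-loops, a loop of length ${} \geq 6$. Formula~\eqref{EqvGgivenC} then gives $v(G;C) \geq \size(C) - 2 \geq 2$ in the first case and $v(G;C) \geq \size(C) - 4 \geq 2$ in the second. Combined with the parity congruence $v(G;C) \cong \size(G)$ mod~$2$ from Lemma~\ref{LemTrivRlns}, this forces $v(G;C) \geq 3$ whenever $\size(G)$ is odd. These bounds dominate the claimed values of $v(G;3)$ (namely $2$ when $G$ is even, $3$ in the middle branch, and $1$ in the "otherwise" branch, where optimality against $v(G;C) \geq 2$ is automatic), so opening a $3$-chain is always at least as good as any other choice.

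The main obstacle will be the $\theta = 1$ subcase, where the jump $c(G-3) = c(G) + 3$ lands anywhere from $2$ through $4$, sending us into either the Theorem~\ref{ThmCAtLeast2} regime or the Lemma~\ref{Lem6Loptimality} regime. There, the distinction between the middle branch ($v = 3$, when $\size(G) \cong 3$ mod~$4$) and the "otherwise" branch ($v = 1$) is driven by $\size(G-3)$ mod~$4$ coming out of $|v(G-3) - 2|$, and careful arithmetic is needed to line those two cases up properly.
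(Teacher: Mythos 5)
Your proposal is correct and takes essentially the same approach as the paper's own proof: both compute $v(G;3)=1+|v(G-3)-2|$ by induction, evaluating $v(G-3)$ via Theorem~\ref{ThmCAtLeast2}, Lemma~\ref{Lem6Loptimality}, or the inductive hypothesis according to the subcase, and both deduce optimality of the $3$-chain from the squeeze $2\le v(G;C)\cong v(G;3)\le 3$ with the congruence taken mod~$2$. One slip to fix in your bookkeeping: when $\theta=1$ it is \emph{not} always true that $\tb$ jumps from $6$ to $8$ and $c(G-3)=c+3$, since $G$ may also contain a chain of length at least $4$ (e.g.\ $G=3+4+6_\ell$ has $c=1<2$ but $\tb(G)=\tb(G-3)=4$ and $c(G-3)=c+1$); however, the only consequence you draw, $c(G-3)\le 4$ (which the paper obtains as ``the terminal bonus rises by at most~$2$''), remains valid, so nothing downstream of this breaks.
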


\begin{proof}
  We write $w(\cdot)$ for this function on positions,
    and prove $v=c$ by induction
    on $\theta$.  In both the base case
  and the inductive step the strategy is to show $v(G;3)=w(G)$.  Given this,
  the optimality of a $3$-chain follows 
  from
  \begin{equation*}
    2\leq v(G;C)\mathop{\cong}_{\hbox{\scriptsize mod~$2$}} v(G;3)=w(G)\leq3 
  \end{equation*}
  for every component $C$  other than a $3$-chain.
  
  Now for the induction.  
  By the hypothesis $c<2$, $G$ does not consist of a single
    $3$-chain, so there is another component.
    This implies that the terminal bonus rises by at
    most~$2$ if a $3$-chain is removed.
    In particular, 
    $c(G-3)\leq c+3\leq4$.  If
    $\theta=1$ then  $v(G-3)$ is given by lemma~\checkedref{Lem6Loptimality}.
  This yields
  \begin{equation*}
  v(G;3)=1+|v(G-3)-2|
  =\begin{cases}
      2&\hbox{if $\size(G-3)$ is odd}
      \\
      3&\hbox{if $\size(G-3)\cong4$ mod~$4$}
      \\
      1&\hbox{if $\size(G-3)\cong2$ mod~$4$}
  \end{cases}
  \end{equation*}
  This is visibly equal to $w(G)$.  Now suppose $\theta>1$.  
  We must show that $v(G;3)$ is equal to whichever of $1$ and~$2$ has
  the same parity as~$G$.  
  It is enough to show $v(G;3)\in\{1,2\}$.  In turn,
    this will follow once we prove
  $v(G-3)\in\{1,2,3\}$.
  We observe
  $c(G-3)=c+1\leq2$.  In the case $c(G-3)=2$ 
  we have $v(G-3)=c(G-3)=2$.  
  In the case $c(G-3)<2$ we have $v(G-3)\in\{1,2,3\}$ by induction.
  This finishes  the proof.
\end{proof}

\begin{lemma}[$3$'s and $4_\ell$'s]
  \label{Lem3sAnd4Ls}
  Suppose $G$ consists of  $3$-chains and $4$-loops.  Then
  \begin{equation*}
    v(G) =
  \begin{cases}
    \hbox{\rm(whichever of $0,4$)${ }\cong\size(G)$ mod~$8$}&\hbox{if $\theta=0$}
    \\
    \phantom{\hbox{\rm(whichever of $0,4$)}}
      3&\hbox{if $G=3+4_\ell^{\even}$}
    \\
    \hbox{\rm(whichever of $1,2$)${ }\cong\size(G)$ mod~$2$}&\hbox{otherwise}
  \end{cases}
  \end{equation*}
  Furthermore, 
  \begin{enumerate}
      \item
           Opening a $4$-loop is optimal
          if and only if $\theta$ is even or~$1$.
      \item
           Opening a $3$-chain is optimal
          if and only if $\theta\geq2$
          or $f$ is even.
  \end{enumerate}
\end{lemma}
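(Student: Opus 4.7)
The plan is to let $w(G)$ denote the right-hand side of the three-case formula and to prove $v(G)=w(G)$ together with the two optimality claims by induction on $\theta+f$. The bases are $G=\emptyset$, $G=3$ and $G=4_\ell$, handled by Lemma~\ref{LemTinyPositions}, together with the small two-component cases $3+3$, $3+4_\ell$ and $4_\ell+4_\ell$, each checkable directly from~\eqref{EqvGgivenC}.

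For the inductive step with $\theta+f\geq2$, the only components present are $3$-chains and $4$-loops, so \eqref{EqValueAsMin} reduces to
\[
v(G)=\min\bigl(v(G;3),\,v(G;4_\ell)\bigr),
\]
with the minimum ranging over whichever of the two types actually appears. Since $G-3$ and $G-4_\ell$ are smaller positions of the same allowed form, the inductive hypothesis applies to them, and \eqref{EqvGgivenC} then yields
\begin{align*}
v(G;3)&=1+\bigl|w(G-3)-2\bigr|,\\
v(G;4_\ell)&=\bigl|w(G-4_\ell)-4\bigr|.
\end{align*}

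I would then split into cases according to whether $\theta=0$, $\theta=1$, or $\theta\geq2$, and further by the parity of $f$ (and, for $\theta\geq2$, that of $\theta$). In each case I unfold $w(G-3)$ and $w(G-4_\ell)$ from the three-case formula, compute the two quantities above, and compare them both to each other and to $w(G)$. This comparison simultaneously proves that the minimum equals $w(G)$ and records which move attains it; reassembling produces the ``iff'' statements, namely that $v(G;4_\ell)=v(G)$ precisely when $\theta$ is even or $\theta=1$, and $v(G;3)=v(G)$ precisely when $\theta\geq2$ or $f$ is even.

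The main obstacle is bookkeeping rather than ideas. The formula for $w$ treats $\theta\in\{0,1\}$ as special, so when computing $w(G-3)$ with $\theta\in\{1,2\}$ or $w(G-4_\ell)$ with $f\in\{1,2\}$ one is routed into a different subrule than the generic $\theta\geq2$, $f\geq2$ one, and each such boundary transition must be verified individually rather than by a single uniform calculation. Once the cases are laid out properly, every subcase collapses to a one-line identity of the shape $1+|k-2|=w(G)$ or $|k-4|=w(G)$ for an integer $k\in\{0,1,2,3,4\}$.
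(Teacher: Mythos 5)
Your proposal is correct and is essentially the paper's own argument: the paper performs exactly this induction using the two recurrences $v(G;3)=1+|v(G-3)-2|$ and $v(G;4_\ell)=|v(G-4_\ell)-4|$, merely organizing the resulting case analysis as a table indexed by $\theta$ (rows) and $f$ (columns) in which each entry is the minimum of the transforms of the entry above and the entry to the left. The boundary transitions you flag are handled there simply by writing out the first few rows explicitly and observing that the rows stabilize from $\theta=2$ onward.
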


\begin{proof}
  Each entry in the following table is the smaller of
    \begin{align*}
        v(G;4_\ell)&{}=\bigl|(\hbox{the entry to the left})-4\bigr|
        \\
        \llap{and\qquad}
        v(G;3)&=1+\bigl|(\hbox{the entry above})-2\bigr|.
    \end{align*}
  Except: in the left column there is no $v(G;4_\ell)$ and in 
  the top row there is no $v(G;3)$.
  The top left entry is $v(\emptyset)=0$.  
It follows by induction that the table gives $v(G)$,
  which justifies all our claims.
  \begin{center}
    \begin{tabular}{lrcccccccl}
      &$f=$&$0$&$1$&$2$&$3$&$4$&$5$&$6$&$\cdots$
      \\
      \noalign{\vskip1pt\hrule\vskip2pt}
      $\theta=0$ case:&$v(G)=$&
      0&4&0&4&0&4&0
      \\
      $\theta=1$ case:&$v(G)=$&
      3&1&3&1&3&1&3
      \\
      $\theta=2$ case:&$v(G)=$&
      2&2&2&2&2&2&2
      \\
      $\theta=3$ case:&$v(G)=$&
      1&1&1&1&1&1&1
    \end{tabular}
  \end{center}
  Starting with $\theta=2$, the rows 
  alternate between all $2$'s and all $1$'s.
\end{proof}

\begin{lemma}[First example of $4_\ell$-optimality]
  \label{Lem4Optimality1}
  Suppose $G$ has a $4$-loop and $c(G)\geq-2$.  Then
  \begin{enumerate}
    \item
      \label{Lem4Optimality1value}
      $v(G)=|c(G)|$ unless  $G=3+3+4_\ell$, in which case $v(G)=2$.
    \item
      \label{Lem4Optimality1move}
      Opening a $4$-loop is optimal unless  $G=3+3+3+4_\ell$.
  \end{enumerate}
\end{lemma}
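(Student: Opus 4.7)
The plan is to split $G$ into two cases. In Case~A, $G=3^\theta+4_\ell^f$ with $f\geq1$; in Case~B, $G$ additionally contains a \emph{big} component, meaning a chain of length${}\geq4$ or a loop of length${}\geq6$.

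Case~A reduces directly to lemma~\ref{Lem3sAnd4Ls}. The hypothesis $c(G)\geq-2$ together with $c(G)=-\theta-4f+\tb(G)$ restricts $(\theta,f)$ to the six pairs giving $G\in\{4_\ell,\,4_\ell+4_\ell,\,3+4_\ell,\,3+3+4_\ell,\,3+3+3+4_\ell,\,3+3+3+3+4_\ell\}$, with $|c|$-values $4,0,1,0,1,2$. Reading $v$ off the table in lemma~\ref{Lem3sAnd4Ls} yields $v=4,0,1,2,1,2$, so $v=|c|$ in every case except $G=3+3+4_\ell$. The same lemma records that opening a $4$-loop is optimal iff $\theta$ is even or $\theta=1$, which fails only at $G=3+3+3+4_\ell$.

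Case~B rests on the observation that removing any $4$-loop from $G$ leaves the big component (and any $3$-chains) intact, so the terminal bonus is unchanged and $c(G-4_\ell)=c(G)+4$. If $c(G)\geq2$ we are done: theorem~\ref{ThmCAtLeast2} gives $v(G)=c(G)=|c(G)|$, and lemma~\ref{LemCAtLeast2}\eqref{ItemCAtLeast2MoveIn3} gives optimality of opening the $4$-loop. Otherwise $c(G)\in\{-2,-1,0,1\}$, so $c(G-4_\ell)\in\{2,3,4,5\}$ and $v(G-4_\ell)=c(G-4_\ell)$ by theorem~\ref{ThmCAtLeast2}; then \eqref{EqvGgivenC} yields
\begin{equation*}
v(G;4_\ell)=|v(G-4_\ell)-4|=|c(G)|.
\end{equation*}

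To prove $v(G)=v(G;4_\ell)$ in this sub-case, I suppose some other component $C$ satisfies $v(G;C)<|c(G)|$. The parity $v(G;C)\equiv c(G)\pmod2$ from lemma~\ref{LemTrivRlns}\eqref{ItemVCSizemod2}, combined with the lower bounds $v(G;C)\geq\size(C)-2$ (chain) or $\geq\size(C)-4$ (loop) from~\eqref{EqvGgivenC}, rules out every possibility except $c(G)=-2$ with $C$ a second $4$-loop and $v(G-C)=4$. But $G-C$ still contains the original $4$-loop plus the big component, so the same invariance argument gives $c(G-C)=c(G)+4=2$, whence $v(G-C)=2$ by theorem~\ref{ThmCAtLeast2}, contradicting $v(G-C)=4$. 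The main obstacle is exactly this parity-and-invariance step; the rest is either a table lookup or immediate from results already proved.
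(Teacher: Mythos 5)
Your proof is correct and follows essentially the same route as the paper's: dispose of the positions consisting only of $3$-chains and $4$-loops by the table in lemma~\ref{Lem3sAnd4Ls}, then in the presence of another component use the invariance of the terminal bonus to get $c(G-4_\ell)=c(G)+4\geq2$, apply theorem~\ref{ThmCAtLeast2}, and finish with the parity-plus-lower-bound argument to show no other component does better. The only (harmless) differences are where the boundary of the case split falls ($4_\ell+4_\ell$ lands in your table case rather than the invariant-terminal-bonus case) and that you rule out a competing $4$-loop explicitly, whereas the paper only needs to compare against components that are not $4$-loops.
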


\begin{proof}
  First we treat the case that removing a $4$-loop changes the terminal bonus.  This can
  only happen when $G$ consists of a $4$-loop and zero
    or more $3$-chains.  The condition
  $c\geq-2$ shows that the number of $3$-chains is $0,1,2,3$ or~$4$.  In these cases
  we have $c=4,1,0,-1$ or~$-2$, and   lemma~\checkedref{Lem3sAnd4Ls} gives 
  $v=4,1,2,1$ or $2$ respectively.  As claimed, $v=|c|$ unless
  $G=3+3+4_\ell$.  
  For the optimality of opening a $4$-loop
    when 
    $G=$(no, one, two or four $3$-chains)
  we refer to  lemma~\checkedref{Lem3sAnd4Ls}.

  Now suppose removing a $4$-loop leaves the terminal bonus invariant.  
  So $c(G-4_\ell)=c+4$.
  We first prove $v(G;4_\ell)=|c|$.
  By $c\geq-2$ we have $c(G-4_\ell)\geq2$, so $v(G-4_\ell)=c(G-4_\ell)$.
  If $c=-2$ resp.~$-1$, then $c(G-4_\ell)=2$ resp.~$3$, so
  $v(G-4_\ell)=2$ resp.~$3$, so 
  $v(G;4_\ell)=2$ resp.~$1$, which equals $|c|$.
  And if $c\geq0$ then
  \begin{equation*}
    v(G;4_\ell)
    =|c(G-4_\ell)-4|
    =(c+4)-4=c=|c|.
  \end{equation*}
  This completes the proof that $v(G;4_\ell)=|c|$.

  If $c\geq2$ then the optimality of opening a $4$-loop 
  is lemma~\checkedref{LemCAtLeast2}\checkedeqref{ItemCAtLeast2MoveIn3}.  Otherwise, 
  $v(G;4_\ell)=|c|=0$, $1$ or~$2$
  by the previous paragraph. 
  If $C$ is a component other than a $4$-loop, then 
  \begin{equation*}
    1\leq v(G;C)\mathop{\cong}_{\mathrm{mod}\,2} v(G;4_\ell)\leq2
  \end{equation*}
  So $v(G;C)$  cannot be less than
  $v(G;4_\ell)$.
  This proves the optimality of a $4$-loop,
  hence $v(G)=v(G;4_\ell)=|c|$.
\end{proof}

\begin{lemma}[Second example of $4_\ell$-optimality]
    \label{Lem4Optimality2}
  Suppose $c<2$ and that $G\neq\emptyset$ has no $3$-chains.
  If $G$ has a $4$-loop then opening it is optimal.
  Regardless of whether $G$ has a $4$-loop, if $c+4f\geq2$ then
  \begin{align}
    v&{ }= 0,1,2,3,4
    \hbox{\ \ in the cases\ \ $c\cong0,\pm1,\pm2,\pm3,4$ mod~$8$}
    \label{EqX}
    \\
    \noalign{\parindent=0pt while if $c+4f<2$ then}
    v&{ } =
    \begin{cases}
      \phantom{\hbox{\rm(whichever of $0,4$)}}
      2&\hbox{if $c\cong2$ mod~$4$}
      \\
      \hbox{otherwise:}
      \\
      \hbox{\rm(whichever of $0,1$)${ }\cong c$ mod~$2$}&\hbox{if $f$ is odd}
      \\
      \hbox{\rm(whichever of $3,4$)${ }\cong c$ mod~$2$}&\hbox{if $f$ is even}
    \end{cases}
    \label{EqY}
  \end{align}
\end{lemma}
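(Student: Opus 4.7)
The plan is to prove the formula for $v(G)$ and the optimality of opening a $4$-loop simultaneously, by induction on $f$. In the base case $f=0$, the hypothesis $c+4f=c<2$ puts us in the ``otherwise'' branch with $f$ even. Since $G$ has no $3$-chains and no $4$-loops, Lemma~\ref{Lem6Loptimality} gives $v(G)$ in terms of $\size(G) \bmod 4$, and Lemma~\ref{LemTrivRlns}\eqref{ItemCSizeMod4} (no $3$-chains, so $\size(G)\equiv c\bmod 4$) rewrites this in the form demanded by the target formula. The optimality claim is vacuous here.

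For the inductive step $f\geq 1$, the focus is on $v(G;4_\ell)$. Since $c<2$ excludes $G=4_\ell$, the position $G-4_\ell$ is nonempty; no $3$-chains are created or destroyed by the removal, so the terminal bonus is preserved. Hence $c(G-4_\ell)=c+4$, $f(G-4_\ell)=f-1$, and crucially $c(G-4_\ell)+4f(G-4_\ell)=c+4f$, so $G-4_\ell$ lies in the same regime as $G$. I compute $v(G-4_\ell)$ using either Theorem~\ref{ThmCAtLeast2} (when $c+4\geq 2$) or the inductive hypothesis (when $c+4<2$), and then $v(G;4_\ell)=|v(G-4_\ell)-4|$. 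A careful walk through the residues of $c\bmod 8$ in the $c+4f\geq 2$ regime, and through $c\bmod 4$ together with the parity of $f$ in the $c+4f<2$ regime, confirms that $v(G;4_\ell)$ matches the target formula.

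For optimality, any competitor component $C\neq 4_\ell$ is a $k$-chain with $k\geq 4$ or a $k$-loop with $k\geq 6$, so \eqref{EqvGgivenC} gives $v(G;C)\geq 2$ and Lemma~\ref{LemTrivRlns}\eqref{ItemVCSizemod2} gives $v(G;C)\equiv v(G;4_\ell)\bmod 2$; these settle optimality whenever $v(G;4_\ell)\leq 3$. In the remaining case $v(G;4_\ell)=4$, a residue check shows this requires $c\leq -4$ and $c\equiv 0\bmod 4$. One must then exclude $v(G;C)=2$: a $4$-chain $C$ would require $v(G-C)=2$, but $c(G-C)\in\{c,c+4\}$ is $\equiv 0\bmod 4$, and the formula (with Theorem~\ref{ThmCAtLeast2}) never yields $v=2$ in that congruence class; a $6$-loop $C$ would require $v(G-C)=4$, but $c(G-C)=c+2\equiv 2\bmod 4$, and the formula instead forces $v(G-C)=2$. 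Both competitors are excluded.

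The main obstacle is the extensive residue-class bookkeeping needed to match $|v(G-4_\ell)-4|$ against the formula's piecewise description: each subcase of the target formula must be checked separately, and taking the absolute value interacts subtly with the split between the $c+4f\geq 2$ and $c+4f<2$ regimes. The optimality argument is comparatively short once the formula is in hand.
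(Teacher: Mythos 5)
Your overall strategy---peel off a $4$-loop, use the invariance of $c+4f$ under that removal, compute $v(G;4_\ell)=|v(G-4_\ell)-4|$, and then rule out competitors by the parity-plus-lower-bound argument, excluding a $4$-chain or $6$-loop in the $v(G;4_\ell)=4$ case by a residue contradiction---is exactly the paper's strategy, and the formula-matching and base case are sound. But there is a genuine gap in how you set up the induction. You induct on $f$, the number of $4$-loops. That suffices for the main branch, because $f(G-4_\ell)=f-1$. It fails in the optimality step: the competitors you must exclude are a $4$-chain $C$ with $v(G-C)=2$ and a $6$-loop $C$ with $v(G-C)=4$, and in both cases $f(G-C)=f(G)$, so your inductive hypothesis says nothing about $G-C$. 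You cannot fall back on Theorem~\ref{ThmCAtLeast2} either: you correctly note that $v(G;4_\ell)=4$ forces $c\leq-4$, whence $c(G-4)\leq c+4\leq 0$ and $c(G-6_\ell)=c+2\leq-2$, so the controlled value of $G-C$ is always below~$2$ in the cases that matter. Thus the assertion that ``the formula never yields $v=2$ (resp.\ $4$) in that congruence class'' is being applied to positions for which the formula has not been established; as written the argument is circular at precisely the point where the work is.

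The fix is to induct on the number of components (as the paper does) or on $\size(G)$, so that the inductive hypothesis applies to $G-C$ for \emph{every} component $C$, not just a $4$-loop. With that change your argument goes through: $G-C$ is nonempty, has no $3$-chains, and satisfies $c(G-C)<2$, so the lemma (for fewer components) describes $v(G-C)$, and your residue contradictions are then legitimate. Two smaller remarks: your handling of the range $-2\leq c<2$ via Theorem~\ref{ThmCAtLeast2} applied to $G-4_\ell$ is a clean substitute for the paper's appeal to Lemma~\ref{Lem4Optimality1}; and the claim that removing a $4$-loop preserves the terminal bonus should be justified by noting that $G-4_\ell$ is nonempty and contains no $3$-chains (so its bonus is $8$ or $4$ according to whether chains are present, the same as for $G$), rather than by the fact that no $3$-chains are created or destroyed.
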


\begin{remark}
    In 
    \checkedeqref{EqY} we could replace every occurrence of $c$ by
    $\size(G)$, because the absence of $3$-chains implies
    $\size(G)\cong c(G)$ mod~$4$.
\end{remark}

\begin{proof}
    We begin with three special cases.
    First, suppose 
    that $G$ is a union of $4$-loops.  Then $c+4f=-4f+4f+8\geq2$, so
    we are asserting that $v(G)$ is given by \checkedeqref{EqX}.
    This is justified by lemma~\checkedref{Lem3sAnd4Ls}.

  Second, suppose $G$ has no $4$-loops.  Then $c+4f=c<2$, so we are asserting that $v(G)$ is given by \checkedeqref{EqY}.  This is justified by 
  lemma~\checkedref{Lem6Loptimality}.  

  Third, suppose $G$ has a $4$-loop and that $c=-2,-1,0$, or~$1$.  
  Then $c+4f\geq2$, so we are asserting that $v(G)$ is given by \checkedeqref{EqX}, namely $v(G)=2,1,0$,
  or~$1$ respectively.
  This is justified by 
  lemma~\checkedref{Lem4Optimality1}.  
  
  For the general case we use induction on the number of
  components.  So suppose that every position with fewer components
  than $G$, that satisfies the hypotheses of the lemma, also 
  satisfies its conclusions.  Regarding the cases already treated
  as base cases, we may suppose that $c(G)<-2$ and that
  $G$ has a $4$-loop and a component
  other than a $4$-loop. hence a longer loop or a chain of
  length${ }\geq4$.  
  The presence of this extra component shows 
  that  removing a $4$-loop from $G$  
  does not change the terminal bonus.  In particular, 
  $c(G-4_\ell)=c+4<-2+4=2$.  This and  the minimality of $G$ show 
  that $v(G-4_\ell)$ is given by whichever of \checkedeqref{EqX} and \checkedeqref{EqY}
  applies to $G-4_\ell$.
  To figure out which one  applies we observe
  \begin{equation}
      \label{EqKeepSameCondition}
    c(G-4_\ell)+4f(G-4_\ell)=c(G)+4f(G)
  \end{equation}
  This shows that whichever of \checkedeqref{EqX} and \checkedeqref{EqY} 
  claims to describe $v(G)$ does indeed describe $v(G-4_\ell)$.

  Our next step is to prove that $v(G;4_\ell)$ is 
  equal to what the lemma claims is
  $v(G)$.  That is, writing 
  $w(\cdot)$ for the function on positions given by \checkedeqref{EqX}
  and \checkedeqref{EqY}, we will prove $v(G;4_\ell)=w(G)$.
  Regardless of which of \checkedeqref{EqX} and \checkedeqref{EqY} applies to
  $G-4_\ell$, we have $v(G-4_\ell)=w(G-4_\ell)\in\{0,1,2,3,4\}$.
  It follows that
  $v(G;4_\ell)=4-v(G-4_\ell)$.  It is also easy to see that
  $w(G)=4-w(G-4_\ell)$: for \checkedeqref{EqX} this uses the fact that
  $c(G)$ and $c(G-4_\ell)$ differ by $4$ mod~$8$, while for 
  \checkedeqref{EqY} this uses 
  the fact that $f(G)$ and $f(G-4_\ell)$ have different
  parities.
  It follows that $v(G;4_\ell)=w(G)$.

  Now we prove the optimality of a $4$-loop.
  Suppose for a contradiction
  that $C$ is a component of $G$
  with $v(G;C)<v(G;4_\ell)$.  
  By
  \begin{equation*}
    2\leq v(G;C)\mathop{\cong}_{\mathrm{mod}\,2}v(G;4_\ell)\leq4
  \end{equation*}
  we must have
  $v(G;C)=2$ and $w(G)=v(G;4_\ell)=4$.  
  The first of these 
  forces $C$ to be a $4$-chain with $v(G-4)=2$, or a $6$-loop 
  with $v(G-6_\ell)=4$.
  Removing a $6_\ell$ leaves the terminal bonus invariant,
  while removing a $4$-chain increases it by $0$ or~$4$.
  In either case we have
  \begin{align*}
    c(G-C)&{ }\leq c(G)+4< -2+4=2
  \end{align*}
  By induction, the lemma describes $v(G-C)$.

  If $C$ is a $4$-chain then $v(G-4)=2$ forces $c(G-4)\cong2$ mod~$4$,
  regardless of whether \checkedeqref{EqX} or \checkedeqref{EqY} applies to $G-4$.
  But then the absence of $3$-chains shows
  \begin{equation*}
    c(G)\cong\size(G)\cong\size(G-4)\cong c(G-4)\cong2\hbox{\ \ mod~$4$}
  \end{equation*}
  This gives 
  $w(G)=2$, regardless of whether \checkedeqref{EqX} or
  \checkedeqref{EqY} applies to $G$.  
This contradicts our early conclusion that $w(G)=4$.
  If $C$ is a $6$-loop then 
  $v(G-6_\ell)=4$ forces $c(G-6_\ell)\cong0$ mod~$4$, regardless
  of whether \checkedeqref{EqX} or \checkedeqref{EqY} applies to $G-6_\ell$.  
  This forces $c(G)\cong2$ mod~$4$, leading to the same contradiction.

  We have proven the optimality of opening a $4$-loop.
  So $v(G)=v(G;4_\ell)=w(G)$ follows and the proof is complete.
\end{proof}

The next two lemmas, taken together,
are an analogue of sections 11--13 of \cite{BC}.
But the formulations and arguments are quite different.

\begin{lemma}[Third example of $4_\ell$-optimality]
    \label{Lem4Optimality3}
  Suppose $G$ has  exactly one $3$-chain, and $\size(G)\cong3$ mod~$4$ and $c<2$.
  If $G$ has a $4$-loop then opening it is optimal.  
  Regardless of whether $G$ has a $4$-loop, if $c+4f\geq2$ then
  \begin{align}
    v(G)&{ }=
    \hbox{$1$ or~$3$ according to whether $c(G)\cong\pm1$ or~$\pm3$ mod~$8$}
    \label{EqA}
    \\
    \noalign{\parindent=0pt while if $c+4f<2$ then}
    v(G)&{ }=
      \hbox{$1$ or~$3$ according to whether $f$ is odd or even.}
    \label{EqB}
  \end{align}
\end{lemma}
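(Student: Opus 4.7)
The plan is to induct on the number of components in $G$, following the pattern of Lemma~\ref{Lem4Optimality2}.  Write $w(G)$ for the value that \eqref{EqA}--\eqref{EqB} predict.  A useful preliminary observation is that the hypotheses $\theta=1$ and $\size(G)\cong3\pmod4$ force $c(G)\cong\size(G)+\tb(G)\cong\pm1\pmod4$, so $c(G)$ is always odd and the permitted values are $c(G)\in\{\dots,-5,-3,-1,1\}$.

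Three situations admit a direct reading.  First, if $G$ has no $4$-loop then $f=0$ is even and $c+4f=c<2$, so~\eqref{EqB} predicts $w(G)=3$, matching Lemma~\ref{Lem3Optimality1}.  Second, if $G=3+4_\ell^j$ for some $j\geq1$ (the only configuration in which removing a $4$-loop changes the terminal bonus), Lemma~\ref{Lem3sAnd4Ls} gives $v(G)$ and the optimality of opening a $4$-loop; verifying the formula is routine.  Third, suppose $G$ has a $4$-loop together with a further component that is neither a $3$-chain nor a $4$-loop, so that removing a $4$-loop preserves $\tb$ and $c(G-4_\ell)=c(G)+4$; if $c(G)\in\{-1,1\}$ then $c(G-4_\ell)\in\{3,5\}$, and Theorem~\ref{ThmCAtLeast2} furnishes $v(G-4_\ell)=c(G)+4$, hence $v(G;4_\ell)=|c(G)|=1=w(G)$.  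The parity bound $v(G;C)\geq1$ (from $\size(G)$ odd) then makes opening a $4$-loop trivially optimal.

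The inductive step proper covers $c(G)\leq-3$ with $G$ as in the third situation above.  Then $G-4_\ell$ still satisfies the hypotheses of the lemma, with $c(G-4_\ell)=c(G)+4<2$, so induction gives $v(G-4_\ell)=w(G-4_\ell)\in\{1,3\}$.  The identities $c(G-4_\ell)+4f(G-4_\ell)=c(G)+4f(G)$, $c(G-4_\ell)\cong c(G)+4\pmod{8}$, and $f(G-4_\ell)=f(G)-1$ force $w(G-4_\ell)$ and $w(G)$ to be the two distinct elements of $\{1,3\}$ (in the $c+4f\geq2$ branch via the $\pm1\leftrightarrow\pm3\pmod{8}$ swap; in the other branch via the parity flip in~$f$).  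Therefore $v(G;4_\ell)=|v(G-4_\ell)-4|=4-w(G-4_\ell)=w(G)$.

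Optimality of opening a $4$-loop is the main obstacle.  Suppose some component $C\neq4_\ell$ has $v(G;C)<v(G;4_\ell)=w(G)$.  The parity congruence from Lemma~\ref{LemTrivRlns}\eqref{ItemVCSizemod2}, together with the trivial bounds $v(G;C)\geq\size(C)-2$ for chains and $v(G;C)\geq\size(C)-4$ for loops, forces $w(G)=3$, $v(G;C)=1$, and then $C=3$ with $v(G-3)=2$.  I rule this out by computing $c(G-3)\pmod{4}$: since $\size(G-3)\cong0\pmod{4}$, the two cases $\tb(G)=6$ (giving $\tb(G-3)=8$) and $\tb(G)=4$ (giving $\tb(G-3)=4$) both yield $c(G-3)\cong0\pmod{4}$.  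Since $G-3$ has no $3$-chains, either $c(G-3)\geq2$ and Theorem~\ref{ThmCAtLeast2} gives $v(G-3)=c(G-3)$, or $c(G-3)<2$ and Lemma~\ref{Lem4Optimality2} applies; the condition $c(G-3)\not\cong2\pmod{4}$ excludes both the ``$c\cong2\pmod{4}$'' branch of~\eqref{EqY} and the $\pm2\pmod{8}$ case of~\eqref{EqX}, so $v(G-3)\in\{0,4\}$ in either scenario.  This contradicts $v(G-3)=2$, and the proof is complete.
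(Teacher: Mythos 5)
Your proposal is correct and follows essentially the same route as the paper: handle the no-$4$-loop case via Lemma~\ref{Lem3Optimality1}, the $3+4_\ell^j$ and $c=\pm1$ cases via Lemmas~\ref{Lem3sAnd4Ls} and~\ref{Lem4Optimality1} (you route the latter through Theorem~\ref{ThmCAtLeast2} applied to $G-4_\ell$, which is equivalent), then induct by stripping a $4$-loop using $c(G-4_\ell)+4f(G-4_\ell)=c+4f$, with optimality reduced to showing $v(G-3)\cong0$ mod~$4$ via Lemma~\ref{Lem4Optimality2}. The only cosmetic differences are that the paper computes $v(G;3)=3$ directly rather than by contradiction, and your parenthetical that $G=3+4_\ell^j$ is where removing a $4$-loop changes the terminal bonus is literally true only for $j=1$, though this has no effect on the argument.
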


\begin{proof}
    This is similar in structure to the previous proof.
    Note that $c$ is odd because $G$ is.
    We begin with three special cases.
    First, if $G$ has no $4$-loops then 
    $c+4f<2$, so the lemma claims $v=3$.  This is justified by
     lemma~\checkedref{Lem3Optimality1}.  Second,
    if $G$ has a $4$-loop and $c(G)=\pm1$ then 
    $c+4f\geq2$, so the lemma claims that $v=1$ and opening
    a $4$-loop is optimal.  This is justified
    by lemma~\checkedref{Lem4Optimality1}.  
    (The exception $3+3+3+4_\ell$ in that lemma
    is irrelevant because $\theta=1$.)
    Third, suppose that $G$ consists 
    of a $3$-chain and $f>0$ many $4$-loops.  Then  
    $c+4f=5$, so the lemma asserts that $v(G)=1$ or~$3$ 
    according to whether $c\cong\pm1$ or~$\pm3$ mod~$8$.  
    The cases $c\cong -1$ or~$3$ mod~$8$ do not occur, and
    the cases $c\cong 1$ or $-3$ mod~$8$ are 
    equivalent to $f$ being odd or even, respectively.
    So we may quote
    lemma~\checkedref{Lem3sAnd4Ls}.

    For the general case we induct on the number of components.
    So suppose every position with fewer components than~$G$,
    that satisfies the hypotheses of the lemma, also satisfies
    its conclusions.  Regarding the cases already treated as base
    cases, we may suppose that  $c<-2$, and that
  $G$ contains a $4$-loop and also a component that is neither
  a $3$-chain nor a $4$-loop.  In particular, removing
  a $4$-loop does not alter the terminal bonus.  So
  $c(G-4_\ell)=4+c<2$.  By induction, the lemma describes
  $v(G-4_\ell)$. 
    In particular, $v(G-4_\ell)=1$ or~$3$.  These lead to
    $v(G;4_\ell)=3$ or~$1$ respectively.

  Next we claim $v(G;3)=3$.  
  We have $c(G-3)\leq c+4<2$, so lemma~\checkedref{Lem4Optimality2}
  computes $v(G-3)$.  
  Which case of that lemma applies 
  depends on the value of $c(G-3)+4f(G-3)$.  But both
  cases give 
    $$
    v(G-3)
    \cong \size(G-3)
    \cong\size(G)-3
    \cong0\hbox{ mod }4
    $$ 
  So $v(G;3)=1+|\hbox{($0$ or $4$)}-2|=3$.
  This proves the optimality of opening a $4$-loop, because any
  component $C\neq3,4_\ell$ has $v(G;C)$ odd and at least~$2$.
  
  All that remains is to justify the stated value of $v(G)$.  
  We know $v(G)=v(G;4_\ell)=4-v(G-4_\ell)$.  On the other
  hand, the equality \checkedeqref{EqKeepSameCondition} from the previous
  proof also holds here, so that whichever of \checkedeqref{EqA} and
  \checkedeqref{EqB} purports to describe $v(G)$ does indeed describe
  $v(G-4_\ell)$.  Examining these two formulas shows that
  $4-v(G-4_\ell)$ is the claimed value of $v(G)$.  For
      \checkedeqref{EqA} this uses the fact that $c(G)$ and $c(G-4_\ell)$
      differ by~$4$, while for \checkedeqref{EqB} this uses the fact
  that $f(G)$ and $f(G-4_\ell)$ have different parities.
\end{proof}

\begin{lemma}[Second example of $3$-optimality]
    \label{Lem3Optimality2}
  Suppose $c<-1$ and that either
  \begin{enumerate}
      \item
          $\theta\geq2$, or
      \item
          $\theta=1$ and $\size(G)\not\cong3$ mod~$4$.  
  \end{enumerate}
  Then opening a $3$-chain
  is optimal, and $v=1$ or~$2$ according to whether $G$ is odd or even.
\end{lemma}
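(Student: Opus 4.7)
The plan is to mirror the proof of Lemma~\ref{Lem3Optimality1}, inducting on the number of components with two goals: compute $v(G;3)$ explicitly and prove that opening a $3$-chain is optimal.

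For the value, I would use $v(G;3) = 1 + |v(G-3) - 2|$ and analyze $v(G-3)$ via previous lemmas or the induction hypothesis. In the first case ($\theta \geq 2$), $\theta(G-3) \geq 1$ and removing a $3$-chain preserves the terminal bonus, giving $c(G-3) = c(G) + 1 \leq -1$. Depending on the sub-structure of $G-3$, I would apply Lemma~\ref{Lem3Optimality1} (when $G-3$ has no $4$-loop), Lemma~\ref{Lem4Optimality1} (when $G-3$ has a $4$-loop and $c(G-3) \geq -2$), Lemma~\ref{Lem4Optimality3} (when $\theta(G-3) = 1$ and $\size(G-3) \equiv 3$ mod~$4$), or the induction hypothesis of the present lemma in the remaining $4$-loop sub-cases. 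In the second case ($\theta = 1$ and $\size(G) \not\equiv 3$ mod~$4$), removing the lone $3$-chain yields $\theta(G-3) = 0$, and Lemma~\ref{Lem4Optimality2} (or Lemma~\ref{Lem6Loptimality}) governs $v(G-3)$. A congruence check via Lemma~\ref{LemTrivRlns}\eqref{ItemCSizeMod4}, combined with the hypothesis $\size(G) \not\equiv 3$ mod~$4$, forces $\size(G-3) \not\equiv 0$ mod~$4$, which rules out $v(G-3) \in \{0, 4\}$. In every instance $v(G-3) \in \{1, 2, 3\}$, and parity then forces $v(G;3) = 1$ or $2$ as claimed.

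For the optimality of opening a $3$-chain, Lemma~\ref{LemTrivRlns} gives $v(G;C) \equiv v(G;3)$ mod~$2$ for every component $C$. If $C$ is a chain of length ${}\geq 4$ or a loop of length ${}\geq 6$, then $v(G;C) \geq 2 \geq v(G;3)$, handling that case. The delicate case is $C = 4_\ell$, where $v(G;4_\ell) = |v(G-4_\ell) - 4|$ can in principle be as small as~$0$. When $G$ is odd, $v(G;4_\ell)$ is odd and hence ${}\geq 1 = v(G;3)$ automatically. When $G$ is even, I need only rule out $v(G-4_\ell) = 4$; since $c(G) \leq -2$ and $\tb(\cdot)$ can only drop upon removing a $4$-loop, $c(G-4_\ell) \leq 2$, and each previous lemma applicable to $G-4_\ell$ (which still contains a $3$-chain, so Lemma~\ref{Lem4Optimality2} does not apply) prevents $v(G-4_\ell) = 4$.

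The main obstacle will be the case-by-case bookkeeping: for each sub-case on $G-3$ and $G-4_\ell$, I must verify that the invoked previous lemma (or the induction hypothesis of the current lemma) genuinely applies and that its output $v$ lies in the required range. The congruence machinery of Lemma~\ref{LemTrivRlns}\eqref{ItemCSizeMod4}, together with the second case's hypothesis $\size(G) \not\equiv 3$ mod~$4$, is what makes the argument work cleanly: it rules out precisely the extreme values of $v(G-3)$ that would contradict the stated formula for $v(G)$.
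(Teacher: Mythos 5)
Your proposal is correct and follows essentially the same route as the paper: compute $v(G;3)\in\{1,2\}$ by a case analysis on $G-3$ (invoking Lemmas~\ref{Lem3Optimality1}, \ref{Lem4Optimality1}, \ref{Lem4Optimality2}, \ref{Lem4Optimality3} or induction, with the congruence $\size(G-3)\not\cong0$ mod~$4$ excluding the values $0$ and $4$), then observe that only a $4$-loop could beat the $3$-chain and rule out $v(G-4_\ell)=4$ by the same roster of lemmas. The paper phrases the optimality step as a minimal-counterexample argument and peels off a few special cases (no $4$-loops, $3$-chain plus loops, $c=-2$) before the induction, but the substance is identical.
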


\begin{proof}
    We begin by treating three special cases.
    First, if $G$ has no $4$-loops  then we quote lemma~\checkedref{Lem3Optimality1}.
    Second,  if $G$ consists of $4$-loops and 
    $3$-chains  then we quote lemma~\checkedref{Lem3sAnd4Ls}. 

    Third, suppose that $G$ consists
    of a $3$-chain and some loops. Then $G$ is odd, so our hypothesis
    on $\size(G)$ forces $\size(G)\cong1$  mod~$4$.   
    Also $c(G-3)=3+c(G)<2$, so lemma~\checkedref{Lem4Optimality2}
    describes $v(G-3)$.  
    Because $c(G-3)\cong\size(G-3)\cong2$ mod~$4$, that lemma
    gives $v(G-3)=2$, independently of whether \checkedeqref{EqX}
    or \checkedeqref{EqY} applies to $G-3$.   From $v(G-3)=2$ we get
    $v(G;3)=1$.  Since $v(G)$ is odd and bounded above
    by $v(G;3)$, 
    the $3$-chain is optimal and $v(G)=1$.  

    We have reduced to the case that
    $G$ has at least two chains, that it has a $4$-loop, and that it has
    a component that is neither a $3$-chain nor a $4$-loop.  
    It follows that 
    the terminal bonus is invariant under the removal of
    either a $3$-chain or a $4$-loop.
    In particular,
    $c(G-3)=1+c<0$ and 
    $c(G-4_\ell)=4+c<3$.  

    As a fourth special case, suppose $c=-2$.  Then 
    lemma~\checkedref{Lem4Optimality1}
    applies to both $G$ and $G-3$, giving $v(G)=2$ and $v(G-3)=1$.
    The first of these is our claimed value for $v(G)$, and the second
    shows $v(G;3)=2$.  Since this equals $v(G)$, opening a 
    $3$-chain is optimal.
    Henceforth we 
    suppose $c(G)<-2$.
    
    Suppose $G$ is a counterexample with fewest possible components;
    we will derive a contradiction.
    The main step is to show $v(G;3)\in\{1,2\}$.
    Suppose first that $\theta=1$.  Lemma~\checkedref{Lem4Optimality2} 
    describes $v(G-3)$.  Because
    $c(G-3)\cong\size(G-3)\not\cong0$ mod~$4$, it says
    $v(G-3)=1$, $2$ or~$3$.  Then $v(G;3)=2$, $1$ or~$2$ respectively.
    Now suppose $\theta=2$ and $\size(G)\cong2$ mod~$4$.  Then 
    lemma~\checkedref{Lem4Optimality3}
    gives $v(G-3)\in\{1,3\}$, hence $v(G;3)=2$.  Finally, suppose
    that either $\theta=2$ and $\size(G)\not\cong2$ mod~$4$, or 
    that $\theta>2$.  Then the minimality of $G$ shows that the current
    lemma describes $v(G-3)$.  In particular, $v(G-3)\in\{1,2\}$, so
    $v(G;3)\in\{2,1\}$ also.
    This finishes the proof of $v(G;3)\in\{1,2\}$.

    Since $G$ is a counterexample, it has a component $C$ with
    $v(G;C)<v(G;3)$.  This is obviously impossible if $v(G;3)=1$, so
    we must have $v(G;3)=2$ and $v(G;C)=0$.  The latter forces 
    $C$ to be a $4$-loop with $v(G-4_\ell)=4$.  
    If $c(G-4_\ell)<-1$ then induction would give
the contradiction $v(G-4_\ell)\in\{1,2\}$.
Therefore 
$c(G-4_\ell)\geq-1$.  On the other hand,
$c(G-4_\ell)=c+4<2$, so we must have $c(G-4_\ell)=-1,0$ or~$1$.
There must be no $4$-loops in
    $G-4_\ell$, or else 
    lemma~\checkedref{Lem4Optimality1}
    would give the contradiction 
    $v(G-4_\ell)=|c(G-4_\ell)|=(\hbox{$0$ or $1$})\neq4$.
    But then lemma~\checkedref{Lem3Optimality1} 
    gives the contradiction $v(G-4_\ell)\in\{1,2,3\}$.
\end{proof}

\begin{proof}[Proof of theorem~\checkedref{ThmOpener}]
    This amounts to combining the results above.
    First suppose $c\geq2$.  
If 
    $G=3+{ }$(one or more loops) then
    theorem~\checkedref{ThmCAtLeast2}
    shows that opening the shortest loop is optimal.  
    Otherwise, the same theorem shows that  
    the standard move (opening a $3$-chain)
    is optimal.  
    
    Henceforth we may assume $c<2$.
    If $G$ has no $3$-chains or $4$-loops then
    we must prove that the standard move is optimal.
    Lemma~\checkedref{Lem6Loptimality} shows that $G$ has a $6$-loop,
    and that opening it (which is the standard move)
    is optimal.  
    If $G$ has  a $3$-chain but no $4$-loops
    then we must prove that the standard move (opening a $3$-chain)
    is optimal.  This is part of lemma~\checkedref{Lem3Optimality1}.
    If $G=4_\ell+3+3+3$ then again we must prove that the standard
    move (opening a $3$-chain) is optimal.  This is part of
    lemma~\checkedref{Lem3sAnd4Ls}.  

    We have reduced to the case that $c<2$ and that 
    $G\neq4_\ell+3+3+3$ has
    a $4$-loop.
    If $c\in\{0,\pm1\}$ then  
    lemma~\checkedref{Lem4Optimality1}  shows
    that opening a $4$-loop is optimal.

    Finally, suppose $c\leq-2$.  If $G$ has 
    no $3$-chains then we must prove that the standard move
    (opening a $4$-loop) is optimal.  This is part of 
    lemma~\checkedref{Lem4Optimality2}.  The only remaining cases
    are $G=3+4_\ell+H$.  If $H$ has no $3$-chains and $4|\size(H)$
    then we must show that opening a $4$-loop is optimal, which is
    part of lemma~\checkedref{Lem4Optimality3}.  On the other hand,
    if $H$ has a $3$-chain or $4\nmid\size(H)$ then we must show that
    the standard move (opening the $3$-chain) is optimal.  This
    is part of lemma~\checkedref{Lem3Optimality2}.
\end{proof}

\section{Game values and controller strategy}
\label{SecValues}

\noindent
The controller's strategy in theorem~\ref{ThmController} depends on
being able to recognize when a position~$G$ has value${}>2$.
We will give two ways to compute 
the value.  First we give an explicit value in terms of 
$c(G)$, $f(G)$ and the overall size and shape of $G$.  
This is complicated.  Then we give our
preferred method, whose essential case
computes $v(G)$ by starting with $c(G_0)$ for a certain
smaller position~$G_0$, and then applying a simple process.  
The following result immediately implies theorem~\ref{ThmVbiggerThan2}.

\begin{theorem}[Values---explicit]
    \label{ThmValuesExplicit}
    Suppose $G$ is a nonempty Dots~\& Boxes position consisting of loops
    and long chains.  Then its value $v=v(G)$ is given by the first of the
    following cases that applies:
    \begin{enumerate}
        \item
            \label{ItemValExplicitc}
            If $c\geq2$ then $v=c$.
        \item
            \label{ItemValExplicit0}
            If $c=0$ and $G=4_\ell+(\hbox{anything except $3+3$})$,
            then $v=0$.
        \item
            \label{ItemValExplicitComplicated}
            If $\theta=0$, 
            or if \,$\theta=1$ and
            $\size(G)\cong3$ mod~$4$, then
            \begin{enumerate}
                \item
                    if $c+4f\geq2$ then $v=0,1,2,3,4$ according to 
                    whether
                    $c\cong 0,\pm1,\pm2,\pm3,4$ mod~$8$.
                \item
                    if $c+4f<2$ then
                    \begin{enumerate}
                        \item
                            if $G$ is odd then $v=1$ resp.\ $3$ if 
                            $f$ is odd resp.\ even.
                        \item
                            if $\size(G)\cong2$ mod~$4$ then $v=2$.
                        \item
                            if $\size(G)\cong0$ mod~$4$ then
                            $v=0$ resp.\ $4$ if $f$ is odd resp.\ even.
                    \end{enumerate}
            \end{enumerate}
        \item
            \label{ItemValExplicitGeneric}
            In all other cases, $v=1,2$ as $G$ is odd,even.
    \end{enumerate}
\end{theorem}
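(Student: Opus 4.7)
The plan is to dispatch each case of the theorem to one of the lemmas of section~\ref{SecOpener} and verify that the value formula given by the lemma matches the one claimed here. Case~(\ref{ItemValExplicitc}) is immediate from theorem~\ref{ThmCAtLeast2}, so for all remaining cases we may assume $c<2$.

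Next we handle case~(\ref{ItemValExplicitComplicated}). If $\theta=0$, we apply lemma~\ref{Lem4Optimality2}; if $\theta=1$ and $\size(G)\cong3$ mod~$4$, we apply lemma~\ref{Lem4Optimality3}. To see that the formulas match, we use lemma~\ref{LemTrivRlns}(\ref{ItemCSizeMod4}) when $\theta=0$ to translate the ``$c\cong2$ mod~$4$'' appearing in lemma~\ref{Lem4Optimality2} into the ``$\size(G)\cong2$ mod~$4$'' used here, and similarly for the $c\equiv 0$ vs $4$ mod $8$ distinction in the $c+4f\geq 2$ subcase. In the $\theta=1$ case $G$ is odd, so only odd residues of $c$ mod~$8$ occur, which is exactly why lemma~\ref{Lem4Optimality3} already presents its answer through the $c\cong\pm1$ vs~$\pm3$ mod~$8$ dichotomy. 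For case~(\ref{ItemValExplicit0}), the hypotheses give $c=0\geq-2$, so lemma~\ref{Lem4Optimality1} applies and yields $v=|c|=0$; the exception $G=3+3+4_\ell$ of that lemma has $c=0$ but is ruled out here by the ``anything except $3+3$'' clause.

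The remaining work is case~(\ref{ItemValExplicitGeneric}), whose defining conditions are $c<2$, $\theta\geq1$, the alternative ``$\theta\geq2$ or ($\theta=1$ and $\size(G)\not\cong3$ mod~$4$)'', and $G$ not covered by case~(\ref{ItemValExplicit0}). We split on whether $G$ has a $4$-loop. If it does not, we apply lemma~\ref{Lem3Optimality1}: the $v=3$ branch of that lemma is ruled out by our hypothesis on $\theta$ and $\size(G)$, leaving $v=2$ in the even case and $v=1$ in the odd case. If $G$ has a $4$-loop and $c\geq-2$, we apply lemma~\ref{Lem4Optimality1}: the values $c\in\{-2,-1,1\}$ give $v=|c|=2,1,1$, matching the parity claim, and the only way $c=0$ survives at this point is $G=3+3+4_\ell$ (case~(\ref{ItemValExplicit0}) being excluded), where the lemma's exceptional clause gives $v=2$ as required. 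Finally, if $G$ has a $4$-loop and $c<-2$, we apply lemma~\ref{Lem3Optimality2}, whose two structural hypotheses coincide with those defining case~(\ref{ItemValExplicitGeneric}), producing $v=1$ or $v=2$ according to parity.

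The main obstacle is not any single deep step but the careful bookkeeping needed to verify that the four cases of the theorem partition all nonempty positions into disjoint buckets matching the four ``entry points'' above, and to translate the formulations of lemmas~\ref{Lem4Optimality2} and~\ref{Lem4Optimality3}---organized by $c$ mod~$8$, $c$ mod~$4$, and the parity of $f$---into the $\size(G)$-based formulation used in case~(\ref{ItemValExplicitComplicated}).
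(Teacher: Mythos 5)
Your proof is correct and takes essentially the same route as the paper: dispatch case (1) to theorem~\ref{ThmCAtLeast2}, case (2) to lemma~\ref{Lem4Optimality1}, case (3) to lemmas~\ref{Lem4Optimality2} and~\ref{Lem4Optimality3} (with the mod-$4$ translation via lemma~\ref{LemTrivRlns}), and case (4) to lemmas~\ref{Lem3Optimality1}, \ref{Lem4Optimality1} and~\ref{Lem3Optimality2}. The only cosmetic differences are that the paper sends $c=-2$ to lemma~\ref{Lem3Optimality2} rather than lemma~\ref{Lem4Optimality1} and cites lemma~\ref{Lem3sAnd4Ls} for $v(4_\ell+3+3)=2$, neither of which affects correctness.
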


\begin{proof}
    \eqref{ItemValExplicitc}
    If $c\geq2$ then $v=c$ by theorem~\ref{ThmCAtLeast2}.  So
    suppose $c<2$.

    \eqref{ItemValExplicit0}
    If $c=0$ and $f>0$ and $G\neq3+3+4_\ell$ then lemma~\ref{Lem4Optimality1}
    gives $v=0$, as we are asserting.

    \eqref{ItemValExplicitComplicated}
    Now suppose $\theta=0$, 
    or that $\theta=1$ and $\size(G)\cong3$ mod~$4$.
    If $c+4f\geq2$ then the theorem's value for $v$ is justified
    by lemma \checkedref{Lem4Optimality2} (if $\theta=0$)
    or~\checkedref{Lem4Optimality3} (if $\theta=1$).
    The $c+4f<2$ case is also justified by these lemmas.

    \eqref{ItemValExplicitGeneric}
    In all  remaining cases we have $\theta\geq2$, or 
    $\theta=1$ and $\size(G)\not\cong3$ mod~$4$.  We must prove 
    $v(G)\in\{1,2\}$.  
    If $G$ has no $4$-loop then we refer to lemma~\checkedref{Lem3Optimality1}.
    So we suppose $f>0$.  If $c=\pm1$ then $v=1$ by 
    lemma~\checkedref{Lem4Optimality1}.  If $c=0$ then we must
    have $G=4_\ell+3+3$, or else case~\eqref{ItemValExplicit0} would
    have applied.  This special case has value~$2$
    by lemma~\checkedref{Lem3sAnd4Ls}.  Finally, if $c<-1$ then
    we refer to lemma~\checkedref{Lem3Optimality2}.
\end{proof}

There is a way to replace the complicated case \eqref{ItemValExplicitComplicated} by
a simpler iterative procedure.  In practice it is easier and
less error-prone than working through the tree of subcases.
It is quicker to use than explain, so the reader might want
to look ahead at example~\ref{EgProcedural}.
We will define a union $G_0$ of components of~$G$.
The strategy for computing $v(G)$ is to show $v(G_0)=c(G_0)$ and
that $v(G)$
is got from $v(G_0)$ by a simple process.  

Suppose  $\theta\leq1$.  
We define the \emph{core}
$G_0$ of $G$
as the union of the following components of~$G$.
Its loops are all the loops of length${}\geq8$.
Its chains are either all the chains of length${}\geq4$
(if there are any) or the $3$-chain 
(if $G$ has one and no other chains).
The core has the useful property that if $H$ is the union of
$G_0$ and possibly some more components of $G$, then $H_0=G_0$.
Write $\theta'\in\{0,1\}$ for the number of $3$-chains in $G-G_0$.

We define maps $\Z\to\Z$:
\begin{equation*}
    \Sigma(x){}=|x-4|+2
    \qquad\quad
    \Theta(x){}=x-1
    \qquad\quad
    \Phi(x){}=|x-4|
\end{equation*}
Only their values on $\Z_{\geq0}$ will be important.  
We think  of $\Sigma$ as decreasing any $x\geq2$ in steps of size~$2$ 
until $x$ enters
$[2,4]$, where $\Sigma$ acts as reflection across~$3$.  
And $\Phi$ decreases $x$ in
steps of size~$4$
until $x$ enters $[0,4]$, where $\Phi$ acts as reflection across~$2$.
These
operations are from \cite{BC}, but the generality in which 
we use them is new.

\begin{theorem}[Values---procedural]
    \label{ThmValuesProcedural}
    Suppose $G$ is a nonempty Dots \& Boxes position consisting
    of loops and long chains.  Then
    \begin{enumerate}
        \item
            \label{ItemValProcc}
            if $c\geq2$ then $v=c$;
        \item
            \label{ItemValProc0}
            if $c=0$, and $G$ has a $4$-loop, and $G\neq4_\ell+3+3$, then $v=0$;
        \item
            \label{ItemValProcOperators}
            if $\theta=0$, or if $\theta=1$ and $\size(G)\cong3$ $\mathrm{mod}\ 4$, then
            $$v=\Phi^f\Theta^{\theta'}\Sigma^s(c(G_0))$$
    \end{enumerate}
    In all other cases $v=1$ or~$2$ according to the parity of $G$.
\end{theorem}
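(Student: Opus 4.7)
Clauses (1), (2), and the ``all other cases'' clause are immediate from prior results. Clause (1) is Theorem~\ref{ThmCAtLeast2}; clause (2) is the $c=0$ instance of Lemma~\ref{Lem4Optimality1}\eqref{Lem4Optimality1value}, whose sole exception $4_\ell+3+3$ is exactly the case excluded here; and the ``other cases'' clause is part~\eqref{ItemValExplicitGeneric} of Theorem~\ref{ThmValuesExplicit}. So the substance is clause~\eqref{ItemValProcOperators}, the identity $v = \Phi^f\Theta^{\theta'}\Sigma^s(c(G_0))$.

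The plan is induction on $f+\theta'+s$, peeling off components of $G-G_0$ in the priority order: $4$-loops first, then the $3$-chain of $G-G_0$ (when $\theta'=1$), then $6$-loops. The base case $G=G_0$ uses that the core consists only of long chains and long loops with $c(G_0)\ge 2$ (immediate from the definition of the core), so Theorem~\ref{ThmCAtLeast2} gives $v(G_0)=c(G_0)$. Peeling a $4$-loop invokes Lemmas~\ref{Lem4Optimality1}, \ref{Lem4Optimality2}, \ref{Lem4Optimality3} for optimality of $4_\ell$ (their sole exception $3+3+3+4_\ell$ is ruled out by $\theta\le 1$), yielding $v(G)=\Phi(v(G-4_\ell))$; since $\theta$ and $\size$ mod~$4$ are preserved, the case~(3) hypothesis is inherited. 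Peeling the $3$-chain of $G-G_0$ (when $\theta'=1$) uses Theorem~\ref{ThmOpener} for optimality of the $3$-chain (its cases (ii),(iii) require a $4$-loop), together with $c(G_0)\ge 4$ (because $G_0$ contains a chain of length~${\ge}4$ in this situation) to give $v(G-3)\ge 2$, which collapses $v(G;3)=1+|v(G-3)-2|$ to $\Theta(v(G-3))$. Peeling a $6$-loop when $\theta=0$ is analogous: optimality of $6_\ell$ is the standard-move rule in absence of $3$-chains and $4$-loops, and $G-6_\ell$ inherits case~(3) with $\theta=0$.

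The hard part is the remaining sub-case $\theta=1$, $\theta'=0$, $f=0$, where $G=G_0+s\cdot 6_\ell$ and the $3$-chain lies in $G_0$. Here removing a $6$-loop changes $\size$ mod~$4$, so the case~(3) hypothesis fails for $G-6_\ell$ and direct induction on $s$ is unavailable. I would split by $c(G)$: when $c\ge 2$, Theorem~\ref{ThmCAtLeast2} gives $v=c$, and I verify $\Sigma^s(c(G_0))=c$ algebraically from the relation $c(G_0)=c(G)+2s+\tb(G_0)-\tb(G)$; when $c<2$, Theorem~\ref{ThmOpener} directs the opener to open the $3$-chain (not a $6$-loop), giving $v=1+|v(G-3)-2|$, where $G-3$ has $\theta=0$ and its value $\Sigma^s(c(G_0-3))$ is already furnished by the $\theta=0$ sub-case. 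The remaining algebraic identity $\Sigma^s(c(G_0))=1+|\Sigma^s(c(G_0-3))-2|$ is then checked by case analysis on whether $G_0$ contains a long loop (this controls $c(G_0)-c(G_0-3)\in\{-3,+3\}$), tracking where iterated applications of $\Sigma$ settle relative to its reflection point at~$3$, and exploiting the parity information that the size constraint $\size(G)\cong3$ mod~$4$ places on $s$ given $\size(L)$ mod~$4$.
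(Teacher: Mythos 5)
Your proposal is correct, and for most of its length it runs along the same lines as the paper's proof: establish $v(G_0)=c(G_0)$ (the paper gets $c(G_0)\geq\tb(G_0)-1\geq3$), then peel off the components of $G-G_0$ in the order $4$-loops, $3$-chain, $6$-loops, converting each peel into an application of $\Phi$, $\Theta$ or $\Sigma$, with the same lemmas doing the work. The genuine divergence is exactly the sub-case you single out as the hard part: $\theta=1$ with the $3$-chain lying in the core, so $G=G_0+s\cdot6_\ell$. The paper disposes of this by asserting that the optimal initial moves are to open the $6$-loops, citing theorem~\ref{ThmOpener}; but that theorem directs the opener to the $3$-chain whenever $c<2$ and no $4$-loop is present, and opening a $6$-loop can in fact be strictly suboptimal here. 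For $G=3+8_\ell+4\cdot6_\ell$ one has $v(G)=3$ while $v(G-6_\ell)=v(3+8_\ell+3\cdot6_\ell)=1$ by lemma~\ref{Lem3Optimality1} (its size is $\cong1$ mod~$4$), so $v(G;6_\ell)=2+|1-4|=5>3$ and the recursion $v(G)=\Sigma(v(G-6_\ell))$ fails at the intermediate positions, even though $\Sigma^4(c(G_0))=\Sigma^4(5)=3$ is still the correct value. Your workaround --- split on $c\geq2$ versus $c<2$, in the latter case open the $3$-chain to reduce to the already-established $\theta=0$ formula for $G-3=(G_0-3)+s\cdot6_\ell$, then verify $\Sigma^s(c(G_0))=1+\bigl|\Sigma^s(c(G_0-3))-2\bigr|$ using $c(G_0)-c(G_0-3)=\pm3$ and the parity that $\size(G)\cong3$ mod~$4$ imposes on $s$ --- is sound; I checked the identity, and the hypothesis $c<2$ is precisely what guarantees $s$ is large enough that $\Sigma^s(c(G_0))$ has settled at its fixed point~$3$ while $\Sigma^s(c(G_0-3))$ has entered the $2\leftrightarrow4$ oscillation at the value~$4$. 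So your extra care in that sub-case is not a stylistic choice: it supplies an argument the paper's own proof is missing. Two small housekeeping points: your base case should also admit $G_0=\emptyset$ (where $v=c=0$ trivially, not via theorem~\ref{ThmCAtLeast2}), and since $G-3$ has the same value of $f+\theta'+s$ as $G$, you should arrange the induction so that the $\theta=0$ case is proved in full before the $\theta=1$, $\theta'=0$ case appeals to it.
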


\begin{example}
    \label{EgProcedural}
    Say $G=(8_\ell^2+18)+6_\ell^9+3+4_\ell^{101}$, where the parentheses
    indicate~$G_0$.  Because $\theta=1$ we check that $\size(G)\cong3$
    mod~$4$, so the theorem applies.
    We begin with $v(G_0)=c(G_0)=18$.  Adjoining the first seven  
    $6$-loops decreases this by~$2$ each time, leaving $4$.  Adjoining
    the remaining two
    $6$-loops bounces this to $2$ and back to~$4$ (reflecting across~$3$).
    Adjoining the $3$-chain reduces this to~$3$.  Adjoining 
    the $4$-loops bounces this between $3$ and $1$ an odd number of
    times, leaving $v(G)=1$.
\end{example}

\begin{proof} 
    Unlike theorem~\ref{ThmValuesExplicit}, no case  \eqref{ItemValProcc}--\eqref{ItemValProcOperators} 
    has priority over any other.  
    When two apply then we are asserting that both are correct.
    Cases \eqref{ItemValProcc} and \eqref{ItemValProc0} are justified by theorem~\ref{ThmCAtLeast2}
    and lemma~\ref{Lem4Optimality1} respectively.  The ``in all other cases'' argument
    is the same as for theorem~\ref{ThmValuesExplicit}.  

    For  case \eqref{ItemValProcOperators}
    we 
    first prove $v(G_0)=c(G_0)$.  If $G_0=\emptyset$ 
    then both sides 
    are~$0$, so suppose $G_0\neq\emptyset$.
    Every component of $G_0$ is a loop of length${}\geq8$ or a chain
    of length${}\geq4$, or a $3$-chain.  There is at most
    one $3$-chain.  Every term in the formula \checkedeqref{EqCV} for
    $c(G_0)$ is nonegative, except that one term can be~$-1$.  So
    $c(G_0)\geq\tb(G_0)-1\geq3$, which implies $v(G_0)=c(G_0)$.

    Next we observe that 
    the initial moves in
    an optimal strategy for the opener is to open
    all the $4$-loops, then the $3$-chain (if $G-G_0$ has one),
    then the $6$-loops.  This follows from theorem~\checkedref{ThmOpener}.
    Therefore $v(G)$ can be got by working backwards from 
    $v(G_0)=c(G_0)$.
    We defined $\Sigma$ so that $v(H)=\Sigma(v(H-6_\ell))$
    for any position $H$ in which opening a $6$-loop is optimal.
    It follows that if $G-G_0$ has no $3$-chain or $4$-loops, 
    then $v(G)=\Sigma^s(v(G_0))$.  If $G-G_0$ has a $3$-chain
    but no $4$-loops, then instead we use 
    $v(G)=1+|v(G-3)-2|=\Theta(v(G-3))$.  The second inequality
    here comes from the fact that $G_0\sset G-3$ is nonempty 
    (else the
    $3$-chain would lie in $G_0$) without $3$-chains or $4$-loops,
    hence has value${}\geq2$ by lemma~\ref{Lem6Loptimality}.
    We have proven $v=\Theta^{\theta'}\Sigma^s(c_0(G))$.
    For general $G$ the same argument shows
    \begin{align*}
        v(G)&{}=\Phi^f\bigl(v(G)-(\hbox{all $4$-loops})\bigr)
    =\Phi^f\Theta^{\theta'}\Sigma^s(v(G_0)).\qedhere
    \end{align*}
\end{proof}

\section{Consequences for midgame play}
\label{SecMidgame}

\noindent
This section 
discusses how to use our results in actual play.   After 
reaching a position consisting of loops and long chains,
one can play optimally by following the strategies in the introduction.
But before play gets that far, there are opportunities
to influence the shape of the endgame.  

We write from the perspective of a player (``you'') 
who expects to be the opener once the game settles down to a union
of loops and long chains.  This means that you expect to lose the
Nimstring game.  
Assuming you are right in this expectation, your
only chance of victory is to gain a large enough 
advantage in captured boxes before the endgame.
For a line of play under consideration, we 
write $G$ for the resulting endgame and  $A$ for your advantage
in boxes at the time play reaches that position.  
Players prefer  odd${}\times{}$odd boards because ties are impossible,
so we restrict to this case.  As a consequence, $A$ and  $\size(G)$ 
have different parities.  
We also assume that you will create 
enough $3$-chains and $4$- and $6$-loops  so that  $c(G)<2$
and hence $v(G)\in\{0,\dots,4\}$.  

A simple rule of thumb is: if $A\leq1$ then you will lose, while 
if $A\geq2$ then you will win.  This is not really true, but it
is true ``generically'' in the sense that  theorem~\ref{ThmValuesExplicit}
shows that almost all positions
with $c(G)<2$ have value $1$ or~$2$.
If $A=1$ then the only way you 
can win is to arrange for $v(G)=0$.  
If $A=2$ resp.~$3$ then the only way you
can lose is for $v(G)=3$ resp.~$4$.  
According to theorem~\ref{ThmValuesExplicit}, 
each of these imposes very strong constraints on~$G$.
We now consider in more detail
the possibilities for pairs $(A,G)$.  Your goal, for given $A$, is to
steer the game toward an endgame $G$ satisfying $A-v(G)>0$.
We refer to theorem~\ref{ThmValuesExplicit} throughout the analysis.
Any $(A,G)$ with $A\leq0$ is lost, 
while
any $(A,G)$ with $A\geq4$ is a win.

\smallskip
First consider $(A,G)$ with  $A=1$.  
You have probably lost, but if you can arrange $v(G)=0$
then you will win.
Your opponent will try to block the creation of a $4$-loop,
and if she succeeds then you will lose ($f=0$ implies $v\neq0$, hence
$v\geq2$).
So we
assume that you can arrange for a $4$-loop to be present.
You will win if you can also arrange for $c=0$,
with the single exception $G=4_\ell+3+3$.

Our advice past this point is less likely to be useful because it
gets into the complicated case \eqref{ItemValExplicitComplicated} of
theorem~\ref{ThmValuesExplicit}.  
If you cannot arrange for $c=0$, then you must arrange for $\theta=0$
and $\size(G)\cong0$ mod~$4$, so
we assume you can achieve this.  (By $c\neq0$, $\theta>0$ 
implies  $v>0$.  And if $\theta=0$ then $v=0$ forces $\size(G)\cong0$
mod~$4$.)
You will win if you can also arrange for the number $s$ of $6$-loops
to be small and
$c\cong0$ mod~$8$, or if you can arrange for $s$ to be large and $f$ to
be even.  Here and below we  write ``$s$ small'' and ``$s$ large'' as
stand-ins for the more precise statements $c+4f\geq2$ and $c+4f<2$
respectively.  (Because $\theta=0$, making $c+4f$ small requires
$6$-loops.)  Actual play is unlikely to lead to many $6$-loops,
so one could ignore the ``$s$ large'' case without much loss.

\smallskip
Now consider $(A,G)$ with $A=2$.  You will probably win,
but if your opponent can arrange for $v(G)=3$ then you will lose.
If you can create
two $3$-chains then you will win.  Even a single $3$-chain will
do if $\size(G)\not\cong3$ mod~$4$.  If you cannot acheive this, then
you must arrange for both of the second pair of alternatives 
of theorem~\checkedref{ThmVbiggerThan2} to 
fail.  That is, you must arrange for $s$ to be small and $c(G)\cong\pm1$
mod~$8$, or for $s$ to be large and $f$ to be odd.

\smallskip
Finally consider $(A,G)$ with $A=3$.  This is similar to
the $A=2$ case but easier since you will win unless $v(G)=4$.
If you arrange for $G$ to have a
$3$-chain, or $\size(G)\cong2$ mod~$4$, 
then you will win.  (This uses
the evenness of $G$.)  Suppose you cannot acheive either of
these, so $\theta=0$ and $\size(G)\cong0$ mod~$4$.
Then you must aim for $s$ to be small and $c(G)\cong0$ mod~$8$,
or for $s$ to be large and $f$ to be odd.

\end{document}